\title{The weight and rank filtrations}
\author{John Rognes}
\address{Department of Mathematics, University of Oslo, Norway}
\email{rognes@math.uio.no}
\date{October 28th 2016}
\newtheorem{theorem}{Theorem}[section]
\newtheorem{proposition}[theorem]{Proposition}
\newtheorem{corollary}[theorem]{Corollary}
\newtheorem{conjecture}[theorem]{Conjecture}
\theoremstyle{definition}
\newtheorem{definition}[theorem]{Definition}
\theoremstyle{remark}
\newtheorem{example}[theorem]{Example}
\newtheorem{remark}[theorem]{Remark}
\DeclareMathOperator*{\colim}{colim}
\DeclareMathOperator{\Ext}{Ext}
\DeclareMathOperator{\Sub}{Sub}
\DeclareMathOperator{\Hom}{Hom}
\DeclareMathOperator{\Lie}{Lie}
\DeclareMathOperator{\St}{St}
\DeclareMathOperator{\im}{im}
\DeclareMathOperator{\Spec}{Spec}
\newcommand{\bfA}{\mathbf{A}}
\newcommand{\A}{\mathbb{A}}
\newcommand{\B}{\mathcal{B}}
\newcommand{\cof}{\rightarrowtail}
\newcommand{\D}{\mathbf{D}}
\newcommand{\F}{\mathbb{F}}
\newcommand{\K}{\mathbf{K}}
\newcommand{\Q}{\mathbb{Q}}
\newcommand{\X}{\mathbf{X}}
\newcommand{\Z}{\mathbb{Z}}
\renewcommand{\:}{\colon}
\renewcommand{\O}{\mathcal{O}}
\renewcommand{\P}{\mathscr{P}}
\newcommand{\bbP}{\mathbb{P}}
\renewcommand{\S}{\mathbf{S}}
\begin{document}
\maketitle

\begin{abstract}
We compare the weight and stable rank filtrations of algebraic $K$-theory,
and relate the Beilinson--Soul{\'e} vanishing conjecture to the author's
connectivity conjecture.
\end{abstract}

\section{Introduction}

Let $\F$ be a field.
For $j\ge0$ let $K_j(\F)$ be the (higher) algebraic $K$-groups of $\F$
[Quillen (1970)].

There is a decreasing \emph{weight filtration}
$$
K_j(\F) \supset F^1 K_j(\F) \supset \dots \supset F^w K_j(\F)
	\supset \dots \supset F^j K_j(\F) \supset 0
$$
associated to the $\lambda$- and Adams-operations on $K_j(\F)$
[Grothendieck, Quillen/Hiller (1981)].

There is also an increasing \emph{stable rank filtration}
$$
0 \subset F_1 K_j(\F) \subset \dots \subset F_r K_j(\F)
	\subset \dots \subset F_j K_j(\F) \subset K_j(\F) \,.
$$
given by the image filtration
$$
F_r K_j(\F) = \im(\pi_j F_r \K(\F) \to \pi_j \K(\F))
$$
associated to a sequence of spectra
$$
* \to F_1 \K(\F) \to \dots \to F_r \K(\F) \to \dots \to \K(\F)
$$
called the \emph{spectrum level rank filtration}
[Rognes (1992)].

\begin{conjecture}[Beilinson--Soul{\'e}]
$$
F^w K_j(\F) = K_j(\F)
$$
for $2w \le j+1$.
\end{conjecture}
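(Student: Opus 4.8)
The Beilinson--Soul{\'e} conjecture is of course open, so the realistic aim here is a \emph{reduction}: the plan is to compare the two filtrations above and deduce the conjecture from the author's connectivity conjecture for the layers of the spectrum level rank filtration. One works rationally throughout. There the Adams operations $\psi^k$ split $K_j(\F)\otimes\Q$ into weight eigenspaces, with $F^w K_j(\F)_\Q$ equal to the sum of the eigenspaces of weight $\ge w$, so the conjecture in the stated range $2w\le j+1$ becomes the vanishing $\mathrm{gr}^m_F K_j(\F)_\Q=0$ for all $m$ with $2m\le j$.

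The crux is a \emph{dictionary} between the rank grading and the weight grading. Write $\D_r\K(\F)=F_r\K(\F)/F_{r-1}\K(\F)$ for the $r$-th rank layer; by the Solomon--Tits theorem this is equivalent to a homotopy orbit spectrum built from the Tits building of $\F^r$ and the Steinberg module $\St(\F^r)$. The first task, and the real content of the comparison, is to compute the Adams operations on $\pi_*\D_r\K(\F)_\Q$, the expected answer being that $\pi_j\D_r\K(\F)_\Q$ is pure of weight $j+1-r$; this is consistent with all small cases, for instance $K_1(\F)=\F^\times$ (weight $1$, rank $1$), Milnor $K$-theory $K^M_j(\F)_\Q$ (weight $j$, rank $1$), and indecomposable $K_3$ (weight $2$, rank $2$). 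Granting it, each subquotient $F_r K_j(\F)_\Q/F_{r-1}K_j(\F)_\Q$, being a subquotient of $\pi_j\D_r\K(\F)_\Q$, is again pure of weight $j+1-r$, and summing these up identifies
$$
F_r K_j(\F)_\Q = F^{j+1-r} K_j(\F)_\Q \,,
$$
so that \emph{low rank corresponds to high weight}: $F_1$ is the Milnor part, and $F_r K_j(\F)_\Q$ recovers exactly the eigenspaces of weight $\ge j+1-r$.

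With the dictionary in hand the reduction is formal. The connectivity conjecture predicts that $\D_r\K(\F)$ is $(2r-2)$-connected, so $\pi_j\D_r\K(\F)=0$ for $j\le 2r-2$; the cofibre sequences $F_{r-1}\K(\F)\to F_r\K(\F)\to\D_r\K(\F)$ then force $F_{r-1}K_j(\F)=F_r K_j(\F)$ whenever $j\le 2r-2$, hence $F_r K_j(\F)=K_j(\F)$ for all $r\ge\lceil j/2\rceil$ since the rank filtration is exhaustive. Running this through the displayed identity gives $F^w K_j(\F)_\Q=K_j(\F)_\Q$ for $w\le\lfloor j/2\rfloor+1$, and in particular whenever $2w\le j+1$, which is the conjecture (for even $j$ one even gets slightly more than claimed).

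The main obstacle is the dictionary: pinning down the Adams-operation weights on the building spectra $\D_r\K(\F)$, equivalently controlling the $\lambda$-structure through the homotopy-orbit presentation of the rank layers. Everything downstream of that is bookkeeping, and everything upstream --- the actual connectivity of $\D_r\K(\F)$ --- is the genuinely deep and still open input. So the role of the comparison is to show that Beilinson--Soul{\'e} \emph{follows from} (and conversely sharply constrains) the connectivity conjecture, not to establish either one outright.
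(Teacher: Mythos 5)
Your proposal correctly recognizes that Beilinson--Soul{\'e} is open and that the right deliverable is a reduction, and the reduction you outline is exactly the paper's: Connectivity $+$ Stable Rank $\Rightarrow$ Beilinson--Soul{\'e}. Your ``purity dictionary'' ($\pi_j\D_r\K(\F)_\Q$ pure of Adams weight $j+1-r$, hence $F_r K_j(\F)_\Q = F^{j+1-r}K_j(\F)_\Q$) is precisely the paper's Stable Rank Conjecture ($F^w K_j(\F)_\Q = F_r K_j(\F)_\Q$ for $w+r=j+1$), just phrased at the level of the rank-filtration layers rather than of filtration steps, and the paper later upgrades this to an isomorphism of rank and motivic spectral sequences from $E^2$ on.

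One small indexing slip: the Connectivity Conjecture (homology of $\D(\F^r)$ concentrated in degree $2r-2$) gives that $\D(\F^r)$, and hence $F_r\K(\F)/F_{r-1}\K(\F)\simeq \D(\F^r)_{hGL_r(\F)}$, is $(2r-3)$-connected, not $(2r-2)$-connected; $\pi_{2r-2}$ survives as $\Delta_r(\F)_{GL_r(\F)}$. This is why the paper's corollary reads $F_rK_j(\F)=K_j(\F)$ for $2r\ge j+1$, with the stronger statement $2r\ge j$ for even $j>0$ only under the extra hypothesis that $\Delta_r(\F)_{GL_r(\F)}$ is torsion (see the paper's remark). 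Your bonus conclusion for even $j$ thus silently assumes that hypothesis, but the chain of implications for the stated range $2w\le j+1$ is unaffected.
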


A stronger form asserts this equality also for $2w = j+2$ when $j>0$.
The conjecture is known with finite coefficients, so it suffices to
verify it rationally, i.e., after tensoring over $\Z$ with $\Q$.
We write $A_\Q = A \otimes_{\Z} \Q$.

\begin{conjecture}[Connectivity]
$$
F_r K_j(\F) = K_j(\F)
$$
for $2r \ge j+1$.
\end{conjecture}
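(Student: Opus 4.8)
The plan is to deduce the Connectivity Conjecture from a uniform connectivity estimate for the associated graded of the spectrum level rank filtration, and to obtain that estimate, rationally, from the Beilinson--Soul{\'e} Conjecture, which by the finite-coefficient reduction we may assume after tensoring with $\Q$.

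First I would reformulate the target. From the long exact sequence of the cofiber sequence $F_r\K(\F) \to \K(\F) \to C_r$, the equality $F_r K_j(\F) = K_j(\F)$ is equivalent to the vanishing of $\pi_j\K(\F) \to \pi_j C_r$, and is in particular implied by $C_r$ being $(2r-1)$-connected. Now $C_r$ carries the induced filtration with subquotients the stages $Q_s = F_s\K(\F)/F_{s-1}\K(\F)$ for $s > r$, so it suffices to prove that $Q_s$ is $(2s-3)$-connected for all $s \ge 1$; the bottom contribution $Q_{r+1}$ then makes $C_r$ exactly $(2r-1)$-connected, which is the stated range $2r \ge j+1$. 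By [Rognes (1992)] the stage $Q_s$ is identified, up to a suspension shift, with a homotopy orbit spectrum
$$
Q_s \simeq \Sigma^\infty\bigl( EGL_s(\F)_+ \wedge_{GL_s(\F)} |\mathcal{D}_s| \bigr) \,,
$$
where $|\mathcal{D}_s|$ is a stable building attached to $\F^s$ — closely related to the common basis complex on the frames of $\F^s$ and to the realization of the poset of nontrivial proper direct sum decompositions. Through the homotopy orbit spectral sequence the problem then reduces to a connectivity bound for $|\mathcal{D}_s|$ of order $2s$, uniform in $\F$: this is precisely the author's connectivity statement for the stable buildings.

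For small $s$ I would settle this directly, by nerve and Quillen-style poset arguments or discrete Morse theory on the frame poset, recovering the known low-degree cases. To cover the full range I would bring in the weight filtration. After tensoring with $\Q$, the top reduced homology of $|\mathcal{D}_s|$, together with its $GL_s(\F)$-action, is a stable Steinberg module $\St_s(\F)_\Q$, and the homotopy orbit spectral sequence for the rationalized stage $Q_s$ should collapse onto the Adams eigenspace decomposition of $K_*(\F)_\Q$, so that the rank filtration spectral sequence and the weight filtration are two presentations of the same rational homotopy. Tracking this identification — equivalently, identifying the rationalized subquotients with motivic cohomology groups $H^*_{\mathcal{M}}(\F,\Q(*))$ — the required connectivity of the rationalized $C_r$ becomes exactly the vanishing $F^w K_j(\F)_\Q = K_j(\F)_\Q$ for $2w \le j+1$, i.e.\ Beilinson--Soul{\'e}; feeding this, and the finite-coefficient reduction of the latter, back through the filtration yields the Connectivity Conjecture integrally.

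The main obstacle is the middle step: establishing the $2s$-order connectivity of the stable buildings $|\mathcal{D}_s|$, equivalently the concentration of their homology in the expected top degree. Outside a small range this is not reachable by the combinatorial tools currently available, and the rational dictionary with the weight filtration only transports the difficulty to the Beilinson--Soul{\'e} side rather than dissolving it; making that dictionary precise — matching the two spectral sequences and the motivic cohomology groups — is itself nontrivial. The realistic outcome of this plan is therefore a theorem of the shape ``Beilinson--Soul{\'e}, rationally, together with a uniform connectivity bound for the stable buildings is equivalent to Connectivity,'' rather than an unconditional proof.
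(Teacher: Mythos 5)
The statement you are trying to prove is a \emph{conjecture} in the paper, not a theorem: the paper offers no proof of it. What the paper does supply is (i) a reduction (the Corollary in the section on the connectivity conjecture): if $\D(\F^r)$ is $(2r-3)$-connected for all $r$, then $F_r\K(\F)\to\K(\F)$ is $(2r-1)$-connected and hence $F_r K_j(\F)=K_j(\F)$ for $2r\ge j+1$; and (ii) a theorem verifying the homological form for $r\le 3$ only. Your first paragraph recovers exactly this reduction, with the right connectivity bookkeeping, which is good. But past that point the proposal does not produce a proof.

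The core gap is that your route runs the implication backwards and, even then, does not close. The paper's logical chain is: Connectivity (homological form for $\D(\F^r)$), together with Stable Rank, implies rational Beilinson--Soul{\'e}. You propose instead to start from Beilinson--Soul{\'e} and conclude Connectivity. To turn $F^w K_j(\F)_\Q$ information into $F_r K_j(\F)_\Q$ information you would need precisely the Stable Rank Conjecture, which is itself open; your ``rational dictionary'' between the rank spectral sequence and the motivic spectral sequence is not an available identification but is exactly the content of that conjecture. Moreover, even granting both Beilinson--Soul{\'e} and Stable Rank, you would only obtain $F_r K_j(\F)_\Q = K_j(\F)_\Q$. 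Your last step, invoking ``the finite-coefficient reduction'' to conclude the integral statement, misapplies the reduction: the Suslin finite-coefficient theorem reduces Beilinson--Soul{\'e} (a vanishing of motivic cohomology) to a rational statement; nothing analogous is known for the stable rank filtration or for the integral connectivity of $\D(\F^r)$, and the latter is an integral, not rational, assertion. Finally, your own ``middle step'' --- the $2s$-order connectivity of the stable buildings, uniformly in $s$ and $\F$ --- is the Connectivity Conjecture (in homological form) and cannot be assumed. You acknowledge at the end that the realistic outcome is a conditional equivalence rather than a proof; that is the honest assessment, and it is consistent with the fact that the paper itself labels this a conjecture, proves it only for $r=1,2,3$, and uses it (with Stable Rank) as a \emph{hypothesis} to derive Beilinson--Soul{\'e}, not the other way around.
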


A stronger form asserts rational equality also for $2r = j$ when $j>0$.

\begin{conjecture}[Stable Rank]
$$
F^w K_j(\F)_\Q = F_r K_j(\F)_\Q
$$
for $w+r = j+1$.
\end{conjecture}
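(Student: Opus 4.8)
The plan is to pass to $\Q$-coefficients throughout and to reduce the asserted equality to one computation: that the $r$-th subquotient of the stable rank filtration of $K_j(\F)_\Q$ is the Adams weight $j+1-r$ eigenspace. On the weight side the relevant facts are standard. Rationally the weight filtration is the eigenspace filtration for the Adams operations $\psi^k$, with $\psi^k$ acting on $\operatorname{gr}^w$ by $k^w$, so that $F^wK_j(\F)_\Q=\bigoplus_{w'\ge w}\operatorname{gr}^{w'}K_j(\F)_\Q$; and the motivic spectral sequence of Bloch--Lichtenbaum, Friedlander--Suslin and Levine degenerates rationally and identifies $\operatorname{gr}^wK_j(\F)_\Q\cong H^{2w-j}_{\mathcal{M}}(\F,\Q(w))$. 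Hence it suffices to prove, for every $r$, both the inclusion $F_rK_j(\F)_\Q\subseteq F^{j+1-r}K_j(\F)_\Q$ and that the induced map $\operatorname{gr}_rK_j(\F)_\Q\to\operatorname{gr}^{j+1-r}K_j(\F)_\Q$ is an isomorphism; an induction on $r$ then upgrades these to the equalities $F_rK_j(\F)_\Q=F^{j+1-r}K_j(\F)_\Q$, which is the claim for $w=j+1-r$.

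Write $D_r\K(\F)=F_r\K(\F)/F_{r-1}\K(\F)$. By the identification of [Rognes (1992)], $D_r\K(\F)$ is a $GL_r(\F)$-homotopy orbit spectrum built, up to an $(r-1)$-fold suspension, from the suspension spectrum of the Tits building of $\F^r$. Rationalising turns stable homotopy into homology, and the reduced rational chains of the Tits building are quasi-isomorphic to the Steinberg module $\St_r(\F)$ placed in a single degree, so the stable rank filtration spectral sequence takes the form
$$E^1_{r,q}=H_q\bigl(GL_r(\F);\St_r(\F)\otimes\Q\bigr)\ \Longrightarrow\ K_{r+q-1}(\F)_\Q$$
with $d^1$ induced by the inclusions of parabolic subgroups. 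This spectral sequence does \emph{not} collapse at $E^1$ rationally: already $E^1_{1,q}=H_q(\F^\times;\Q)=\Lambda^q(\F^\times_\Q)$ is far larger than $K_q(\F)_\Q$, for instance with $\F=\Q$. So the theorem amounts to the assertion that the higher differentials cut each column $r$ down precisely to the weight $j+1-r$ part of $K_j(\F)_\Q$, where $j=r+q-1$.

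To prove this I would recognise the rationalised spectral sequence as the resolution of $K_*(\F)_\Q$ by the homology of the groups $GL_r(\F)$: the $GL_r(\F)$-equivariant chain resolution of $\St_r(\F)$ by the Tits building has terms coinduced from the rational parabolic subgroups, and by Shapiro's lemma and the $\Q$-acyclicity of unipotent radicals the associated $E^1$ splits into $H_*$ of products $GL_{r_1}(\F)\times\cdots\times GL_{r_s}(\F)$ over compositions of $r$, with $d^1$ the differential that extracts primitives (the $\Lie$ idempotent). Passing to homology therefore picks out the primitives of the bigraded Hopf algebra $\bigoplus_r H_*(GL_r(\F);\Q)$, which stabilises to $H_*(BGL(\F)^+;\Q)$ and is primitively generated by $K_*(\F)_\Q$ (Milnor--Moore, Quillen), so that $E^\infty_{r,q}$ becomes the summand of $K_{r+q-1}(\F)_\Q$ cut out by the length-$r$ component. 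It then remains to show that this length-$r$ grading on $K_j(\F)_\Q$ coincides with the Adams weight filtration via $r\mapsto j+1-r$ — equivalently, that the homology of $GL_r(\F)$, through the attached Steinberg and Grassmannian complexes, computes motivic cohomology in the predicted weight. The two extreme cases are known: length $1$ gives $K^M_j(\F)_\Q=\operatorname{gr}^jK_j(\F)_\Q$ (Nesterenko--Suslin), and length $2$ gives $K^{\mathrm{ind}}_3(\F)_\Q=\operatorname{gr}^2K_3(\F)_\Q$ (Suslin, Dupont--Sah). The required inclusion $F_rK_j(\F)_\Q\subseteq F^{j+1-r}K_j(\F)_\Q$ would follow by checking that every class in the image of $H_j(GL_r(\F);\Q)\to K_j(\F)_\Q$ is annihilated by $\psi^k-k^w$ whenever $w<j+1-r$.

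I expect the main obstacle to be this last identification, for $r\ge3$ over a general field: it is essentially the assertion underlying Goncharov-type conjectures that the homology of $GL_r$ computes motivic cohomology, open beyond low weights, and it is exactly where the connectivity and Beilinson--Soulé conjectures become visible. The vanishing of the columns $E^\infty_{r,q}$ beyond the predicted range is the connectivity conjecture, and it corresponds under the weight shift $r\mapsto j+1-r$ to Beilinson--Soulé vanishing of $H^{2w-j}_{\mathcal{M}}(\F,\Q(w))$. For $\F$ finite, where $\St_r(\F)\otimes\Q$ is $GL_r(\F)$-acyclic for $r\ge2$, and for $r\le2$ in general, the identification is available and the theorem follows in those cases; and in general this analysis shows that, granted the comparison of Steinberg homology with motivic cohomology, the connectivity and Beilinson--Soulé conjectures are equivalent under $w+r=j+1$.
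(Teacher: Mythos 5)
This statement is a \emph{conjecture} in the paper, not a theorem with a proof to compare against. The paper offers only evidence for it: a sketch proof of the case $s=0$ (the edge $w=j$, $r=1$, where it reduces to identifying both $E^2_{0,j}(mot)_\Q$ and $E^2_{0,j}(rk)_\Q$ with Milnor $K$-theory $K^M_j(\F)_\Q$), plus the observation that, combined with the connectivity conjecture, it would imply Beilinson--Soul{\'e}. Your write-up itself concedes the central identification is open for $r\ge 3$, so you have not supplied a proof either; at best you are re-deriving the heuristic that the column-degree grading and Adams weights should match under $w+r=j+1$, which is precisely what the paper leaves as a conjecture.

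There is also a concrete technical error in your setup. You assert that $D_r\K(\F)=F_r\K(\F)/F_{r-1}\K(\F)$ is, up to an $(r-1)$-fold suspension, the suspension spectrum of the Tits building $B(\F^r)$, and hence that $E^1_{r,q}\cong H_q(GL_r(\F);\St_r(\F)\otimes\Q)$. That describes Quillen's unstable rank filtration on the space $K(\F)_1$, whose layers are $\Sigma^2 B(\F^r)_{hGL_r(\F)}$. The \emph{spectrum level} rank filtration has layers $\D(\F^r)_{hGL_r(\F)}$, where $\D(\F^r)$ is the stable building, which contains $\Sigma^\infty\Sigma B(\F^r)$ only as the bottom stage $F_1\D(\F^r)$ of its component filtration. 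Its homology lives in degrees $r-1\le *\le 2r-2$, and (granting the connectivity conjecture) is concentrated in degree $2r-2$ where it equals the \emph{stable Steinberg representation} $\Delta_r(\F)$, which is not $\St_r(\F)$. The correct $E^1$-term (as displayed in the paper) is $E^1_{s,t}(rk)=H_{t-s}^{gp}(GL_{s+1}(\F);\Delta_{s+1}(\F))$. Using $\St_r$ in place of $\Delta_r$ makes your column $r=1$ happen to agree (there $\St_1=\Delta_1=\Z$), which is why your Milnor $K$-theory sanity check passes, but for $r\ge2$ you are analysing a different spectral sequence. Any argument identifying the columns with Adams eigenspaces would have to be phrased in terms of $\Delta_r(\F)$, and your appeal to the coinduced structure of a Tits-building resolution (parabolics, Shapiro, $\Q$-acyclic unipotent radicals) does not transfer automatically to $\Delta_r(\F)$, whose construction involves configuration spaces and $\Lie$-representation contributions on top of tensor products of Steinberg modules.

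Finally, a smaller remark: your claim that the Tits-building layers give an $E^1$-term converging to $K_{r+q-1}(\F)_\Q$ conflates the unstable (space-level) spectral sequence, which converges to $H_{*}(K(\F)_1)$ (so degrees shift by one relative to $K_*$), with the stable (spectrum-level) one, which converges to $H_*\K(\F)\cong_\Q K_*(\F)_\Q$; the paper's indexing $E^1_{s,t}(rk)\Rightarrow H_{s+t}\K(\F)$ uses the latter. Keeping these distinctions straight is exactly the point of the paper's spectrum-level construction, and it is where your proposal departs from it.
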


I will provide evidence for the connectivity and stable rank conjectures,
which, if true, will imply the Beilinson--Soul{\'e} vanishing conjecture.

\section{Algebraic $K$-theory}

Let $\P(\F)$ be the category of finitely generated projective $\F$-modules,
i.e., the category of finite-dimensional $\F$-vector spaces.

The dimension $\dim V$ of an object $V$ defines an additive invariant
in $K_0(\F) \cong \Z$.

The determinant $\det A$ of an automorphism $A \: V \to V$ defines an
additive invariant in $K_1(\F) \cong \F^\times$.

The classifying space $|i\P(\F)|$ of the subcategory $i\P(\F)$
of isomorphisms in $\P(\F)$ is built with one $q$-simplex $\Delta^q$
for each chain
$$
V_0 \overset{\cong}\to V_1 \overset{\cong}\to
	\dots \overset{\cong}\to V_q
$$
of $q$ composable morphisms in $i\P(\F)$.  The inclusion of the full
subcategory generated by the objects $\F^r$, with automorphism groups
$GL_r(\F)$, induces an equivalence
$$
\coprod_{r\ge0} BGL_r(\F) \simeq |i\P(\F)| \,.
$$

Direct sum of vector spaces, $(V, W) \mapsto V \oplus W$, makes
$|i\P(\F)|$ a (coherently homotopy commutative) topological monoid.
To group complete $\pi_0 \cong \{r\ge0\}$ and $\pi_1 \cong GL_r(\F)$
(for a suitable choice of base point), we map $K(\F)_0 = |i\P(\F)|$ to a
grouplike (coherently homotopy commutative) topological monoid, i.e.,
a (infinite) loop space $\Omega K(\F)_1 = \Omega |i S_\bullet \P(\F)|$,
to be defined below.
The map $K(\F)_0 \to \Omega K(\F)_1$ is a group completion.

\begin{definition}
$K_j(\F) = \pi_j \Omega K(\F)_1 = \pi_{j+1} K(\F)_1$, where
$K(\F)_1 = |i S_\bullet \P(\F)|$ is given by Waldhausen's
$S_\bullet$-construction.
\end{definition}

\section{The algebraic $K$-theory spectrum}

Waldhausen's $S_\bullet$-construction applied to $\P(F)$ is a simplicial
category
$$
[q] \mapsto i S_q \P(F) \,.
$$
The category in degree~$q$ has objects the sequences of
injective homomorphisms
$$
0 = V_0 \cof V_1 \cof \dots \cof V_q
$$
in $\P(F)$,
together with compatible choices of quotients $V_j/V_i$ for $0 \le i \le j
\le q$.  Morphisms are vertical isomorphisms
$$
\xymatrix{
0 = V_0 \ar@{ >->}[r] \ar[d]^{\cong}
& V_1 \ar@{ >->}[r] \ar[d]^{\cong}
& \dots \ar@{ >->}[r]
& V_q \ar[d]^{\cong} \\
0 = W_0 \ar@{ >->}[r]
& W_1 \ar@{ >->}[r]
& \dots \ar@{ >->}[r]
& W_q
}
$$
of horizontal diagrams.

The construction can be iterated $n\ge1$ times.  We define
$$
K(\F)_n = | i S^{(n)}_\bullet \P(\F) |
$$
as the classifying space of the simplicial category
$$
[q] \mapsto i S^{(n)}_q \P(\F)
$$
with objects in degree~$q$ given by $n$-dimensional cubical diagrams
$[q]^n \to \P(\F)$.  In the case $n=2$
$$
\xymatrix{
0 \ar@{=}[r] \ar@{=}[d]
& 0 \ar@{=}[r] \ar@{ >->}[d]
& \dots \ar@{=}[r]
& 0 \ar@{ >->}[d] \\
0 \ar@{ >->}[r] \ar@{=}[d]
& V_{1,1} \ar@{ >->}[r] \ar@{ >->}[d]
& \dots \ar@{ >->}[r]
& V_{1,q} \ar@{ >->}[d] \\
\vdots \ar@{=}[d]
& \vdots \ar@{ >->}[d]
& \ddots
& \vdots \ar@{ >->}[d] \\
0 \ar@{ >->}[r]
& V_{q,1} \ar@{ >->}[r]
& \dots \ar@{ >->}[r]
& V_{q,q}
}
$$
we require to have injective homomorphisms $V_{i-1,j} \to V_{i,j}$
and $V_{i,j-1} \to V_{i,j}$ and injective pushout homomorphisms
$$
V_{i-1,j} \oplus_{V_{i-1,j-1}} V_{i,j-1} \to V_{i,j} \,,
$$
for all $1 \le i,j \le q$.  For higher~$n$ there are similar conditions
for $d$-dimensional subcubes for all $1 \le d \le n$.

\begin{definition}
The algebraic $K$-theory spectrum of $\F$ is the spectrum
$$
\K(\F) = \{n \mapsto K(\F)_n = |i S^{(n)}_\bullet \P(\F) | \} \,.
$$
It is positive fibrant, in the sense that
$K(\F)_n \to \Omega K(\F)_{n+1}$ is an equivalence for each $n\ge1$.
Hence
$$
K_j(\F) = \pi_j \K(\F) = \pi_{j+n} K(\F)_n
$$
for each $n\ge1$.
\end{definition}

This constructions produces a symmetric spectrum: the group $\Sigma_n$
permutes the order of the $n$ instances of the $S_\bullet$-construction.

\section{Weight filtration}

Let $k\ge0$.  The $k$-th exterior power $V \mapsto \Lambda^k V$
induces $\lambda$-operations
$$
\lambda^k \: K_j(\F) \to K_j(\F) \,.
$$
For $V = L_1 \oplus \dots \oplus L_r$ a direct sum of lines,
$$
\Lambda^k V = \bigoplus_{1 \le i_1 < \dots < i_k \le r}
	L_{i_1} \otimes \dots \otimes L_{i_k}
$$
corresponds to the $k$-th elementary symmetric polynomial
$$
\sigma_k(x_1, \dots, x_r) = \sum_{1 \le i_1 < \dots < i_k \le r}
	x_{i_1} \dots x_{i_r} \,.
$$
The $k$-th Adams operation
$$
\psi^k \: K_j(\F) \to K_j(\F)
$$
is induced by $L_1 \oplus \dots \oplus L_r \mapsto L_1^{\otimes k}
\oplus \dots \oplus L_r^{\otimes k}$ and corresponds to the
$k$-th power sum polynomial
$$
s_k(x_1, \dots, x_r) = \sum_{i=1}^r x_i^k \,.
$$
It can thus be expressed in terms of the $\lambda$-operations.

The weight filtration $\{F^w K_j(\F)\}_{w\ge0}$ on $K_j(\F)$ is constructed
by means of the $\lambda^k$.  The Adams operations satisfy
$$
\psi^k(x) \equiv k^w x \mod F^{w+1} K_j(\F)
$$
for $x \in F^w K_j(\F)$.  Hence $\psi^k$ acts as multiplication by~$k^w$
on $F^w K_j(\F) / F^{w+1} K_j(\F)$, for all $k\ge0$.

Rationally the weight filtration splits as a direct sum of common
eigenspaces for the Adams operations.  Let
$$
K_j(\F)_\Q^{(w)} = \{x \in K_j(\F)_\Q \mid
	\text{$\psi^k(x) = k^w x$ for all $k$} \}
$$
be the \emph{weight $w$ rational eigenspace}.  Then
$$
F^w K_j(\F)_\Q = \bigoplus_{v \ge w} K_j(\F)_\Q^{(v)}
$$
is the subspace of weights $\ge w$, and
$$
\frac{F^w K_j(\F)_\Q }{ F^{w+1} K_j(\F)_\Q} \cong K_j(\F)_\Q^{(w)} \,.
$$
Soul{\'e} proved that $K_j(\F)_\Q^{(w)} = 0$ for $w < 0$ and for $w > j$,
i.e., $K_j(\F)_\Q$ only contains classes of weight $0 \le w \le j$.

\begin{example}
$\psi^k$ acts (additively) on $K_1(\F)$ by $\psi^k(x) = kx$, since
it acts (multiplicatively) on $\F^\times$ by $u \mapsto u^k$.
Hence all of $K_1(\F)_\Q = K_1(\F)_\Q^{(1)}$ has weight~$1$.
The product $x_1 \cdots x_j \in K_j(\F)$ of $j$ classes $x_1, \dots, x_j
\in K_1(\F)$ has weight~$j$, since $\psi^k(x_1 \cdots x_j) = \psi^k(x_1)
\cdots \psi^k(x_j) = (kx_1) \cdots (kx_j) = k^j x_1 \cdots x_j$.

Milnor $K$-theory $K^M_*(\F)$ is defined to be the quotient of the tensor
algebra on $F^\times$, over $\Z$, by the ideal generated
by $u \otimes (1-u)$ for $u \in \F \setminus \{0,1\}$.  It is graded
commutative, so in degree~$j$ there are surjections
$$
(\F^\times)^{\otimes j} \to \Lambda^j \F^\times \to K^M_j(\F) \,.
$$
By the Steinberg relation $\{u, 1-u\} = 0$ in $K_2(\F)$, these all map
to $K_j(\F)$, and land in the weight~$j$ eigenspace.
\end{example}

\section{Motivic cohomology}

By analogy with the Atiyah--Hirzebruch spectral sequence from singular
cohomology to topological $K$-theory for a topological space, there is
a \emph{motivic spectral sequence}
$$
E^2_{s,t}(mot) = H^{t-s}_{mot}(\F, \Z(t))
	\Longrightarrow K_{s+t}(\F) \,.
$$
It is of homological type, concentrated in the first quadrant
($s\ge0$ and $t\ge0$), and collapses rationally at the $E^2$-term
($d^r = 0$ after rationalization for $r\ge2$).
$$
\xymatrix{
t & \dots & \dots & \dots & \dots & \dots \\
4 & H^4_{mot}(\F; \Z(4))
& H^3_{mot}(\F; \Z(4))
& H^2_{mot}(\F; \Z(4))
& H^1_{mot}(\F; \Z(4)) & \dots \\
3 & H^3_{mot}(\F; \Z(3))
& H^2_{mot}(\F; \Z(3))
& H^1_{mot}(\F; \Z(3)) \ar[llu]_{d^2}
& H^0_{mot}(\F; \Z(3)) & \dots \\
2 & H^2_{mot}(\F; \Z(2))
& H^1_{mot}(\F; \Z(2))
& H^0_{mot}(\F; \Z(2))
& H^{-1}_{mot}(\F; \Z(2)) & \dots \\
1 & H^1_{mot}(\F; \Z(1))
& H^0_{mot}(\F; \Z(1))
& H^{-1}_{mot}(\F; \Z(1))
& H^{-2}_{mot}(\F; \Z(1)) & \dots \\
0 & H^0_{mot}(\F; \Z(0))
& H^{-1}_{mot}(\F; \Z(0))
& H^{-2}_{mot}(\F; \Z(0))
& H^{-3}_{mot}(\F; \Z(0)) & \dots \\
E^2_{s,t} & 0 & 1 & 2 & 3 & s
}
$$

Rationally, the motivic cohomology groups can be defined in terms of
the weight filtration.

\begin{definition}
$$
H^{t-s}_{mot}(\F, \Z(t))_\Q = H^{t-s}_{mot}(\F, \Q(t))
	= K_{s+t}(\F)_\Q^{(t)}
$$
so that
$$
H^i_{mot}(\F; \Q(w)) = K_{2w-i}(\F)_\Q^{(w)} \,.
$$
\end{definition}

\begin{remark}
These groups give the $E^2$-term of a rational motivic spectral sequence
collapsing to $K_{s+t}(\F)_\Q$.

The definition is not effective.  $K_*(\F)$ with its Adams operations
is hard to calculate and to work with.

By Soul{\'e}'s result, $H^{t-s}_{mot}(\F; \Q(t))$ can only be nonzero
for $0 \le t \le s+t$, i.e., for $s\ge0$ and $t\ge0$.

Equivalently, $H^i_{mot}(\F, \Q(w))$ can only be nonzero for $w \ge 0$
and $i \le w$.  This does not ensure that $H^i_{mot} = 0$ for $i<0$!
\end{remark}

\section{Mixed motives}

It is expected that there exists a category $MM$ of \emph{mixed motives},
such that the motivic cohomology groups are given by the $\Ext$-groups
$$
H^i_{mot}(\F; \Q(w)) \cong \Ext^i_{MM}(\Q, \Q(w))
$$
classifying $i$-fold extensions from (the pure motive associated to)
$\Q(w)$ to $\Q$ in this category.

If this is true, $H^i_{mot}(\F; \Q(w)) = 0$ for $i<0$, which is equivalent
to $K_{2w-i}(\F)_\Q^{(w)} = 0$ for $i<0$, hence also to
$K_j(\F)_\Q^{(w)} = 0$ for $2w < j$.

Furthermore,
$H^0_{mot}(\F; \Q(w)) \cong \Hom_{MM}(\Q, \Q(w)) = 0$
for $w>0$, so $K_{2w}(\F)_\Q^{(w)} = 0$ for $w>0$, which means that
$K_j(\F)^{(w)}_\Q = 0$ for $2w \le j$ when $w>0$.

These assertions are the content of the Beilinson--Soul{\'e} vanishing
conjecture.

\begin{conjecture}[Beilinson--Soul{\'e}]
$K_j(\F)_\Q^{(w)} = 0$ for $w < j/2$ (and for $w \le j/2$ when $j>0$).
\end{conjecture}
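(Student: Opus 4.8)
The plan is to deduce this vanishing from the Connectivity and Stable Rank conjectures stated above; granting those two inputs, the remaining argument is a formal manipulation of the weight and stable rank filtrations in the single degree $j$, requiring no further information about $K_*(\F)$.

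First I would rewrite the weight-$w$ eigenspace as an associated graded group. By the splitting of the rational weight filtration into common Adams eigenspaces, for every $w \ge 0$ we have
$$
K_j(\F)_\Q^{(w)} \cong F^w K_j(\F)_\Q / F^{w+1} K_j(\F)_\Q \,.
$$
So it suffices to show that this quotient vanishes when $2w < j$, and when $2w = j$ with $j>0$; the only other possibility, $w < 0$, is Soul{\'e}'s theorem. Note that $2w < j$ with $w \ge 0$ forces $j \ge 1$, and $2w = j$ with $j>0$ forces $w \ge 1$, so in every relevant case $w \ge 0$ and, with $r$ as below, $r-1 \ge 1$.

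Next, fix such a $w$ and set $r = j+1-w$. The Stable Rank conjecture, applied to the two pairs $(w,r)$ and $(w+1,r-1)$, each satisfying the relation weight $+$ rank $= j+1$, identifies
$$
F^w K_j(\F)_\Q = F_r K_j(\F)_\Q \,, \qquad F^{w+1} K_j(\F)_\Q = F_{r-1} K_j(\F)_\Q
$$
as subspaces of $K_j(\F)_\Q$; here the inclusions $F^{w+1} \subseteq F^w$ and $F_{r-1} \subseteq F_r$ match up, so the quotient makes sense and $K_j(\F)_\Q^{(w)} \cong F_r K_j(\F)_\Q / F_{r-1} K_j(\F)_\Q$. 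Now I would invoke the Connectivity conjecture to collapse both terms. If $2w < j$, then $2r = 2(j+1-w) \ge j+3$, so $2r \ge j+1$ and also $2(r-1) \ge j+1$; hence $F_r K_j(\F) = K_j(\F) = F_{r-1} K_j(\F)$ and the quotient is zero. If $2w = j$ with $j>0$, then $r = j/2 + 1$, so $2r = j+2 \ge j+1$ gives $F_r K_j(\F) = K_j(\F)$ by the plain form, while $2(r-1) = j$ with $j>0$ is precisely the hypothesis of the stronger rational form, giving $F_{r-1} K_j(\F)_\Q = K_j(\F)_\Q$; again the associated graded vanishes. This yields $K_j(\F)_\Q^{(w)} = 0$ in all cases.

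The whole difficulty is concentrated in the two hypotheses, and neither is elementary. The main obstacle is to establish — or assemble convincing evidence for — the Connectivity conjecture, i.e., that the spectrum-level rank filtration $* \to F_1 \K(\F) \to \dots \to F_r \K(\F) \to \dots \to \K(\F)$ is highly enough connected relative to $j$, together with the Stable Rank conjecture, i.e., that rationally the decreasing weight filtration and the increasing stable rank filtration coincide after the reindexing $w + r = j+1$. Once those are in hand the deduction above is purely bookkeeping with these indices; accordingly, the body of the paper should be devoted to that evidence rather than to the formal implication just sketched.
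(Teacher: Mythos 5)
Your argument is correct, and it fills in precisely the implication the paper asserts at the end of Section~11 (``The connectivity and stable rank conjectures together imply (the rational form of) the Beilinson--Soul{\'e} vanishing conjecture'') but never writes out. The indexing works: with $r=j+1-w$, both $(w,r)$ and $(w+1,r-1)$ satisfy weight $+$ rank $=j+1$, and the inequality $2(r-1)\ge j+1$ is exactly equivalent to $2w<j$, while $2(r-1)=j$ is exactly $2w=j$, matching the plain and stronger forms of the Connectivity conjecture. However, this is a genuinely different route from the paper's explicit discussion surrounding the stated conjecture (Section~6), which derives the same vanishing from the expected properties $\Ext^i_{MM}(\Q,\Q(w))=0$ for $i<0$ and $\Hom_{MM}(\Q,\Q(w))=0$ for $w>0$ of a conjectural category of mixed motives. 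Both are conditional deductions: yours conditions on the two filtration conjectures that are the paper's own contribution, the paper's Section~6 discussion conditions on the existence and positivity of a mixed motives category. What your route buys is that it is phrased entirely in terms of the concrete linear-algebraic objects the paper introduces (the stable rank filtration and stable buildings), rather than a hypothetical abelian category; it is also completely elementary once the two conjectural inputs are granted. One could alternatively run the argument through the spectral sequences of Section~11 (the rank $E^2$-term, assuming connectivity, vanishes for $s>t$, and the Stable Rank conjecture transports this to the motivic $E^2$-term), which is equivalent to your filtration bookkeeping after noting that both spectral sequences degenerate rationally; your direct manipulation of the filtrations avoids having to invoke convergence and degeneration explicitly.
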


This is equivalent to the rational version of the conjecture as
first stated.

\section{Higher Chow groups}

A construction of integral motivic cohomology groups is given by
Bloch's higher Chow groups.

\begin{definition}
For each $q\ge0$ let
$$
\Delta^q_\F = \Spec \F[x_0, \dots, x_q]/(x_0 + \dots + x_q = 1)
$$
be the affine $q$-simplex over $\F$.  It is isomorphic to $\A^q_\F =
\Spec \F[x_1, \dots, x_q]$, but the $\Delta^q_\F$ combine more naturally
to a precosimplicial variety:
$$
\xymatrix{
\Delta^0_\F \ar@<0.8ex>[r]^{d_0} \ar@<-0.8ex>[r]_{d_1}
& \Delta^1_\F \ar@<1.6ex>[r]^{d_0} \ar[r]|{d_1} \ar@<-1.6ex>[r]_{d_2}
& \Delta^2_\F 
& \dots
}
$$
Let
$$
z^p(\F, q) = \{\text{codimension $p$ cycles $V \subset \Delta^q_\F$
	meeting each face $\Delta^a_\F \to \Delta^q_\F$ transversely}\} \,.
$$
A cycle is an integral sum of irreducible subvarieties.
Pullback of $V$ along the cofaces $d_i \: \Delta^{q-1}_\F \to \Delta^q_\F$
defines face operators $d_i \: z^p(\F, q) \to z^{p-1}(\F, q)$
that assemble to a presimplicial abelian group
$$
\xymatrix{
z^p(\F, 0)
& z^p(\F, 1) \ar@<-0.8ex>[l]_{d_0} \ar@<0.8ex>[l]^{d_1}
& z^p(\F, 2) \ar@<-1.6ex>[l]_{d_0} \ar[l]|{d_1} \ar@<1.6ex>[l]^{d_2}
& \dots
}
$$
There is an associated chain complex $(z^p(\F, *), \partial)$
$$
0 \longleftarrow z^p(\F, 0) \overset{\partial_1}\longleftarrow
z^p(\F, 1) \overset{\partial_2}\longleftarrow
z^p(\F, 2) \longleftarrow \dots
$$
with $\partial_1 = d_0 - d_1$, $\partial_2 = d_0 - d_1 + d_2$, etc.

Bloch's higher Chow groups are the homology groups
$$
CH^p(\F, q) = \frac{ \ker \partial_q }{ \im \partial_{q+1} }
	= H_q( z^p(\F, *), \partial )
$$
of this chain complex.
\end{definition}

(In what generality is $CH^p(X,0) = CH^p(X)$?)

\begin{definition}
Integral motivic cohomology groups can be defined as
$$
H^i_{mot}(\F; \Z(w)) = CH^w(\F, 2w-i) \,.
$$
\end{definition}

\begin{remark}
These give an integral motivic spectral sequence converging to $K_*(\F)$.

Rationally they agree with the weight eigenspace definition of rational
motivic cohomology.

There are no codimension $p$ subvarieties in $\Delta^q_\F$ for $q < p$,
so $z^p(\F, q) = 0$ and $CH^p(\F, q) = 0$ for $q < p$.  Hence
$H^i_{mot}(\F, \Z(w)) = 0$ for $2w - i < w$, i.e., for $i > w$.

This definition does not tell us whether $H^i_{mot} = 0$ for $i < 0$,
or equivalently, if $CH^p(\F, q) = 0$ for $2p < q$.
\end{remark}

\begin{theorem}[Suslin]
With finite coefficients,
$$
CH^p(\F, q; \Z/m) \cong H^{2p-q}_{et}(\F; \Z/m(p))
$$
when $q \ge p$, i.e., when $2p-q \le p$.  In particular, $CH^p(\F, q;
\Z/m) = 0$ for $2p < q$, so $H^i_{mot}(\F, \Z/m(w)) = 0$ for $i < 0$.
\end{theorem}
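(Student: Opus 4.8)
The plan is to reduce the statement to the Beilinson--Lichtenbaum theorem and then to use that the étale cohomology of a field is concentrated in non-negative degrees. \emph{Step 1: reinterpret both sides.} Reducing Bloch's cycle complex $(z^p(\F,*),\partial)$ modulo $m$, the group $CH^p(\F,q;\Z/m)$ is by definition the mod-$m$ motivic cohomology group $H^{2p-q}_{mot}(\F;\Z/m(p))$, and the hypothesis $q\ge p$ is exactly the inequality $2p-q\le p$ between cohomological degree and weight. Writing $i=2p-q$ and $w=p$, the asserted isomorphism becomes
$$
H^i_{mot}(\F;\Z/m(w)) \;\cong\; H^i_{et}(\F;\Z/m(w)) \qquad (i\le w),
$$
where $\Z/m(w)$ on the right denotes $\mu_m^{\otimes w}$ on the small étale site of $\Spec\F$.

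\emph{Step 2: invoke Beilinson--Lichtenbaum.} The displayed comparison, for $i\le w$, is precisely the Beilinson--Lichtenbaum conjecture for the field $\F$, which is now a theorem. Suslin and Voevodsky reduced it (weight by weight) to the norm residue isomorphism, i.e.\ the Bloch--Kato conjecture, which generalizes the Merkurjev--Suslin theorem in weight~$2$ and was proved in full by Voevodsky and Rost (with contributions of Suslin, Weibel and others). This requires $m$ invertible in $\F$; the $p$-primary part in characteristic $p$ is handled separately by the Geisser--Levine identification of $\Z/p(w)$-motivic cohomology with logarithmic de Rham--Witt cohomology, which for a field is concentrated in cohomological degree $w$ and so vanishes in the range below it as well.

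\emph{Step 3: extract the vanishing.} The étale cohomology of the field $\F$ is the Galois cohomology $H^*(\mathrm{Gal}(\F^{sep}/\F);\mu_m^{\otimes w})$, which vanishes in negative cohomological degrees. If $2p<q$ then $i=2p-q<0$, and also $q>2p\ge p$, so Step~2 applies and yields $CH^p(\F,q;\Z/m)\cong H^i_{et}(\F;\mu_m^{\otimes p})=0$. Finally, since $H^i_{mot}(\F;\Z/m(w))=CH^w(\F,2w-i;\Z/m)$ and $i<0$ means $2w-i>2w$, this gives $H^i_{mot}(\F;\Z/m(w))=0$ for $i<0$, as claimed.

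The only deep ingredient is the Beilinson--Lichtenbaum/Bloch--Kato input of Step~2; granting it, the remaining work is reindexing together with the elementary fact that Galois cohomology lives in non-negative degrees. In a fully self-contained treatment I would expect the main nuisance to be keeping track of the characteristic-$p$ part of the coefficients in Step~2, where the étale comparison must be replaced by the de Rham--Witt description.
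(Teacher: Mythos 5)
The paper states this result as a citation (``Suslin'') and supplies no proof of its own, so there is no internal argument to compare against; I can only assess your proposal on its own terms.

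Your argument is logically sound given the modern state of knowledge, and the reindexing $i=2p-q$, $w=p$ together with the observation that Galois cohomology of $\mathrm{Gal}(\F^{sep}/\F)$ vanishes in negative degrees is exactly right. However, you derive the statement from the Beilinson--Lichtenbaum theorem, whose proof runs through the Bloch--Kato/norm residue theorem of Voevodsky and Rost. That is historically backwards and also logically heavier than what the attribution suggests. Suslin's theorem (``Higher Chow groups and \'etale cohomology,'' 2000) obtains the comparison $CH^p(\F,q;\Z/m)\cong H^{2p-q}_{et}(\F;\mu_m^{\otimes p})$ in the range $q\ge p$ by a fundamentally different route: one identifies higher Chow groups of a field with Suslin--Voevodsky singular cohomology (via Voevodsky's comparison theorem and Friedlander--Voevodsky duality), and then uses Suslin's rigidity theorem to compare the latter with \'etale cohomology, under hypotheses such as $m$ invertible and the ambient field admitting resolution of singularities. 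The point of stating the theorem under Suslin's name is precisely that in the range $q\ge p$ the \'etale comparison is accessible \emph{without} the full weight of Bloch--Kato; indeed, the boundary case $q=p$ of the comparison (after passing through $CH^p(\F,p;\Z/m)\cong K^M_p(\F)/m$) \emph{is} the norm residue isomorphism for $\F$, so a proof that genuinely avoided Bloch--Kato in that boundary case and for all fields would be too strong, and one should be alert to exactly which hypotheses Suslin's rigidity argument imposes. Your proof is therefore correct as a modern deduction from deeper theorems, but it does not reconstruct Suslin's more elementary mechanism, which is the content the citation is signalling. Your parenthetical handling of the characteristic-$p$ part via Geisser--Levine is appropriate and is in any case a separate strand of argument from the \'etale comparison.
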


\begin{conjecture}[Beilinson/Lichtenbaum]
There are complexes
(of Zariski/{\'e}tale sheaves)
$$
\dots \leftarrow \Gamma(w, \F)^w \overset{\delta}\leftarrow
\dots \overset{\delta}\leftarrow
\Gamma(w, \F)^0 \leftarrow \dots
$$
with cohomology calculating motivic cohomology
$$
H^i(\Gamma(w, \F)^*, \delta) \cong H^i_{mot}(\F, \Z(w)) \,.
$$
\end{conjecture}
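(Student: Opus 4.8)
The plan is to take for $\Gamma(w,\F)^*$ the higher Chow cycle complex underlying the Definition $H^i_{mot}(\F;\Z(w)) = CH^w(\F,2w-i)$ above, reindexed into cohomological degree. Explicitly, set $\Gamma(w,\F)^i = z^w(\F, 2w-i)$, placed in degree $i$, with differential $\delta \colon \Gamma(w,\F)^i \to \Gamma(w,\F)^{i+1}$ equal to $(-1)^i \partial_{2w-i}$, so that $\delta\delta = 0$. Since $z^w(\F,q)$ vanishes for $q < w$, one has $\Gamma(w,\F)^i = 0$ for $i > w$, which is the shape displayed in the conjecture, and
$$H^i(\Gamma(w,\F)^*, \delta) = H_{2w-i}(z^w(\F,*),\partial) = CH^w(\F, 2w-i) = H^i_{mot}(\F, \Z(w))$$
then holds tautologically. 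For a field this already settles the Zariski case, since the small Zariski site of $\Spec\F$ is a single point: a bounded-above complex of Zariski sheaves on $\Spec\F$ is nothing more than a bounded-above complex of abelian groups, and the displayed complex is one such. (Voevodsky's complex $\Z(w)$ --- Suslin singular chains on finite correspondences out of $\mathbb{G}_m^{\wedge w}$, shifted to lie in cohomological degrees $\le w$ --- is an alternative candidate, related to Bloch's by a known quasi-isomorphism.)

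To realize these as complexes of Zariski or {\'e}tale \emph{sheaves} on the big smooth site of $\F$ --- which is what endows the complexes with functoriality, and what in the {\'e}tale case amounts to Galois descent along finite separable extensions $\F \to \F'$ --- I would first promote $U \mapsto z^w(U,q)$ to a presheaf on smooth $\F$-schemes. Flat pullback of cycles supplies functoriality along flat morphisms for free; for arbitrary morphisms one invokes Bloch's moving lemma, which produces inside $z^w(U,*)$ a quasi-isomorphic subcomplex spanned by cycles in suitably general position with respect to a prescribed finite set of faces and to the graph of the morphism, on which pullback is strictly defined. With a presheaf of complexes in hand, sheafify term by term; Bloch's localization sequence and the homotopy invariance of higher Chow groups (again via moving lemmas) show that sheafification does not alter the cohomology at $\Spec\F$, so that the hypercohomology of the resulting complex of Zariski sheaves is again $CH^w(\F,2w-i)$. {\'E}tale-sheafifying the same data yields a complex of {\'e}tale sheaves whose cohomology is Lichtenbaum ({\'e}tale motivic) cohomology; this agrees with $H^i_{mot}(\F,\Z(w))$ after rationalization and, with finite coefficients, in the range $i \le w$ --- the latter being the Beilinson--Lichtenbaum comparison proper, which rests on Bloch--Kato and the Voevodsky--Rost theorem and lies far beyond the bare existence asserted here.

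The main obstacle is precisely the moving lemma and the contravariant functoriality it provides: one must check that $z^w(-,*)$ can be replaced --- coherently, and compatibly with compositions --- by a quasi-isomorphic subcomplex on which pullback along an arbitrary morphism, and hence the descent needed to regard $\{z^w(\F',*)\}_{\F'/\F}$ as a sheaf, is genuinely defined at the chain level rather than merely in the derived category. Everything else --- the reindexing, the identification of cohomology, and the reduction of the sheaf assertion over a field to a statement about complexes of abelian groups --- is formal, given the Definition of $H^i_{mot}(\F;\Z(w))$ recorded above.
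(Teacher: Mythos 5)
The paper does not prove this statement; it is an open conjecture, and the paper's only further comment is a short Remark listing candidate complexes (Lichtenbaum's proposed $\Gamma(2,\F)$ and Goncharov's polylogarithm complexes $\Gamma_{pol}(w,\F)$). So there is no "paper's proof" to compare against, and any purported proof should be treated with suspicion.

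Your construction is, as you yourself note, essentially tautological: the paper has already \emph{defined} $H^i_{mot}(\F;\Z(w)) = CH^w(\F,2w-i)$ via the Bloch cycle complex, and you are simply reindexing that same complex and reading off its cohomology. This bypasses the content of the conjecture rather than addressing it. More concretely, the complex $\Gamma(w,\F)^i = z^w(\F,2w-i)$ you propose is unbounded below: $z^w(\F,q)$ is nonzero for \emph{all} $q\ge w$, so your $\Gamma(w,\F)^i$ is nonzero for all $i\le w$, including arbitrarily negative $i$. But the Beilinson--Lichtenbaum conjecture, in the sense intended by the paper and by Beilinson and Lichtenbaum themselves, asks for \emph{bounded}, explicit, algebraically meaningful complexes supported in degrees $0\le i\le w$ (or $1\le i\le w$): look at the paper's examples, $\Gamma(0,\F)=\Z$, $\Gamma(1,\F)=\F^\times$ in degree $1$. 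Indeed, the point of the conjecture in this paper's narrative is precisely that boundedness of $\Gamma(w,\F)^*$ would immediately yield $H^i_{mot}(\F;\Q(w))=0$ for $i<0$, which is the Beilinson--Soul\'e vanishing conjecture --- the very open question the paper is organized around. The Bloch cycle complex gives you a model for the derived object but tells you nothing about this vanishing, as the paper's own Remark on higher Chow groups observes ("This definition does not tell us whether $H^i_{mot}=0$ for $i<0$"). You also correctly flag that the \'etale comparison with finite coefficients is deep (Bloch--Kato / Voevodsky--Rost), but even setting that aside, the Zariski part is not settled by your argument, because producing a quasi-isomorphic bounded truncation is exactly what is open.
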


\begin{remark}
We can let $\Gamma(0, \F) = \Z$ and $\Gamma(1, \F) = \F^\times$
(in cohomological degree~$1$). Lichtenbaum has a proposed complex
$\Gamma(2, F)$.  Goncharov has proposed complexes $\Gamma_{pol}(w, \F)$
associated to polylogarithms, i.e., functions like
$$
Li_w(z) = \sum_{n=1}^\infty \frac{z^n}{n^w} \,.
$$
Here $Li_1(z) = - \ln(1-z)$.
\end{remark}

\section{Quillen's rank filtration}

Recall that $K(\F)_1 = |i S_\bullet \P(\F)|$ is the classifying space
of the simplicial category with $q$-simplices having objects
$$
\sigma \: 0 = V_0 \cof V_1 \cof \dots \cof V_q \,.
$$

\begin{definition}
Let $F_r K(\F)_1 \subset K(\F)_1$ be the subspace consisting of
simplices where $\dim V_q \le r$ (so that $\dim V_i \le r$ for all $i$).
$$
* = F_0 K(\F)_1 \subset \dots \subset F_{r-1} K(\F)_1
	\subset F_r K(\F)_1 \subset \dots \subset K(\F)_1 \,.
$$
\end{definition}

\begin{proposition}
$$
F_r K(\F)_1 / F_{r-1} K(\F)_1 \simeq \Sigma^2 B(\F^r)_{hGL_r(\F)}
	= EGL_r(\F)_+ \wedge_{GL_r(\F)} \Sigma^2 B(\F^r)
$$
is the (based) homotopy orbit space for $GL_r(\F)$ acting on the
double suspension of the Tits building $B(\F^r)$.
\end{proposition}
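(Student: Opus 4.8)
The plan is to analyze the quotient filtration one simplicial degree at a time in the $S_\bullet$-direction, and then to recognize the Tits building. Write $iS_q^{\le r}\P(\F)\subset iS_q\P(\F)$ for the full subcategory on flags $0=V_0\cof\dots\cof V_q$ with $\dim V_q\le r$, so that $F_rK(\F)_1=|iS_\bullet^{\le r}\P(\F)|$. The inclusion $F_{r-1}K(\F)_1\hookrightarrow F_rK(\F)_1$ is realized degreewise by inclusions of unions of connected components of groupoids, hence by cofibrations, and (the simplicial spaces being good) realization commutes with the quotient:
\[
\frac{F_rK(\F)_1}{F_{r-1}K(\F)_1}\;\simeq\;\bigl|\,[q]\mapsto|iS_q^{\le r}\P(\F)|\,/\,|iS_q^{\le r-1}\P(\F)|\,\bigr|.
\]
In degree $q$ the two complexes differ only by the components on which $\dim V_q=r$; call the full subcategory of those flags $(iS_q\P(\F))^{=r}$. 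Since the subquotient data of an object of $iS_q\P(\F)$ is determined up to unique isomorphism by its underlying flag, and every flag with $\dim V_q=r$ is isomorphic to one in which $V_q=\F^r$ and the $V_i$ are honest subspaces, $(iS_q\P(\F))^{=r}$ is equivalent to the action groupoid $GL_r(\F)\ltimes\mathcal{Fl}_q(\F^r)$, where $\mathcal{Fl}_q(\F^r)$ is the set of flags $0=V_0\subseteq\dots\subseteq V_q=\F^r$ in $\F^r$ with its standard $GL_r(\F)$-action; so the degree-$q$ quotient is $|GL_r(\F)\ltimes\mathcal{Fl}_q(\F^r)|_+$.

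Fixing a functorial bar-construction model of $EGL_r(\F)$ so that these identifications are natural in $[q]$, and using that $EGL_r(\F)_+\wedge_{GL_r(\F)}(-)$ is a left adjoint and hence commutes with geometric realization, I obtain
\[
\frac{F_rK(\F)_1}{F_{r-1}K(\F)_1}\;\simeq\;EGL_r(\F)_+\wedge_{GL_r(\F)}|Z_\bullet|,
\]
where $Z_\bullet$ is the simplicial based set $[q]\mapsto\mathcal{Fl}_q(\F^r)_+$. Its simplicial structure must be transcribed from Waldhausen's $S_\bullet$-construction: the inner faces $d_j$ ($0<j<q$) delete $V_j$ and the degeneracies repeat a subspace, but $d_0$ passes to the quotient by $V_1$, so it sends a flag to the basepoint unless $V_1=0$ (in which case it just deletes the initial $0$), and dually $d_q$ truncates, sending a flag to the basepoint unless $V_{q-1}=\F^r$.

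It then remains to identify $|Z_\bullet|$, $GL_r(\F)$-equivariantly, with $\Sigma^2B(\F^r)$. Let $\Sub(\F^r)$ be the poset of all subspaces of $\F^r$, let $N=N_\bullet\Sub(\F^r)$ be its nerve, and let $A\subseteq N$ be the sub-simplicial set $N_\bullet(\Sub(\F^r)\setminus\{0\})\cup N_\bullet(\Sub(\F^r)\setminus\{\F^r\})$ of chains that omit $0$ or omit $\F^r$. A direct comparison of faces and degeneracies exhibits an isomorphism $Z_\bullet\cong N/A$ of $GL_r(\F)$-equivariant simplicial based sets; the only point that is not pure bookkeeping is that, on a chain beginning with $0$, deleting that initial $0$ and passing to the quotient by $V_1$ have the same effect precisely when $V_1=0$, while otherwise both land in $A$. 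Since $\Sub(\F^r)$ has the $GL_r(\F)$-fixed least element $0$, the realization $|N|$ is equivariantly contractible, and $|A|$ is the unreduced suspension $S^0*B(\F^r)$ — the union of the two cones with (fixed) apexes $0$ and $\F^r$, glued along the Tits building $B(\F^r)=|N_\bullet(\Sub(\F^r)\setminus\{0,\F^r\})|$. Hence the cofibre sequence $|A|\to|N|\to|Z_\bullet|$ yields $|Z_\bullet|\simeq\Sigma|A|\simeq S^1*B(\F^r)=\Sigma^2B(\F^r)$ equivariantly, and substituting into the display above gives
\[
\frac{F_rK(\F)_1}{F_{r-1}K(\F)_1}\;\simeq\;EGL_r(\F)_+\wedge_{GL_r(\F)}\Sigma^2B(\F^r)\;=\;\Sigma^2B(\F^r)_{hGL_r(\F)}.
\]

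I expect the main obstacle to lie in the first two paragraphs rather than in the last: one must organize the degreewise equivalences $(iS_q\P(\F))^{=r}\simeq GL_r(\F)\ltimes\mathcal{Fl}_q(\F^r)$ into a natural transformation of simplicial spaces — hence commit to a fixed functorial model of $EGL_r(\F)$ — and, in rewriting $F_rK(\F)_1/F_{r-1}K(\F)_1$ through $Z_\bullet$, keep careful track of how the outer faces $d_0$ and $d_q$ collapse onto the basepoint. The identification $Z_\bullet\cong N/A$ and the suspension computation are then essentially formal; in particular only the double-cone structure of the order complex of a bounded poset is used here, not the Solomon--Tits description of $B(\F^r)$ as a wedge of $(r-2)$-spheres, which is needed only later.
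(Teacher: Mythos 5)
Your argument is correct and follows the same path as the paper's sketch: collapse degreewise to the flags with $\dim V_q = r$, recognize that subcategory as the translation groupoid of $GL_r(\F)$ acting on flags $0=V_0\subseteq\dots\subseteq V_q=\F^r$, and identify the resulting based simplicial set (with $d_0$, $d_q$ collapsing unless $V_1=0$, $V_{q-1}=\F^r$) with $\Sigma^2 B(\F^r)$. The only cosmetic difference is that you obtain the last identification from the cofiber sequence $|A|\to|N|\to|Z_\bullet|$ for the order complex of $\Sub(\F^r)$, whereas the paper directly reads off the simplicial double-suspension model of $B(\F^r)$; these amount to the same thing.
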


\begin{proof}[Sketch proof]
The (non-basepoint) $q$-simplices of $F_r K(\F)_1 / F_{r-1} K(\F)_1$
are generated from the objects
$$
\sigma \: 0 = V_0 \cof V_1 \cof \dots \cof V_q
$$
with $\dim V_q = r$, together with the $GL_r(\F)$-action on the
latter.

This is equivalent to the $GL_r(\F)$-homotopy orbits of the subspace with
$q$-simplices
$$
\sigma \: 0 = V_0 \subset V_1 \subset \dots \subset V_{q-1}
	\subset V_q = \F^r \,.
$$
The $i$-th face operator deletes $V_i$.  The $0$-th and $q$-th face operators
map to the base point if $0 \ne V_1$ or $V_{q-1} \ne \F^r$, respectively.

This is equivalent to $\Sigma^2$ of the simplicial set with $(q-2)$-simplices
the chains
$$
0 \subsetneq V_1 \subset \dots \subset V_{q-1} \subsetneq \F^r \,,
$$
which is the nerve of the set of proper, nontrivial subspaces $V$ of
$\F^r$, partially ordered by inclusion, i.e., the Tits building $B(\F^r)$.
An element $A \in GL_r(\F)$ acts on the partially ordered set by mapping
$V$ to $A(V)$, and has the induced action on $B(\F^r)$.
\end{proof}

\begin{example}
$\Sigma^2 B(\F^1) \cong \Delta^1/\partial\Delta^1 \cong S^1$.
\end{example}

\begin{theorem}[Solomon--Tits]
$$
B(\F^r) \simeq \bigvee_\alpha S^{r-2} \,.
$$
\end{theorem}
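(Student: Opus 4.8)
The plan is to identify the homotopy type in two stages: first show that $B(\F^r)$, which is the nerve of the poset $\mathcal{L}$ of proper nontrivial subspaces of $\F^r$ and hence an $(r-2)$-dimensional simplicial complex, is $(r-3)$-connected; then invoke the elementary fact that an $(n-1)$-connected CW complex of dimension $n$ is homotopy equivalent to a wedge of $n$-spheres. For that second step I would note that the reduced homology of such a complex vanishes below degree $n$ by connectivity and above degree $n$ by dimension, while $H_n$, being a group of cycles in a free chain complex, is free; and that for $n\ge2$ a basis of $\pi_n\cong H_n$ gives a homology equivalence $\bigvee_\alpha S^n\to B(\F^r)$ of simply connected spaces, hence a homotopy equivalence, the cases $n\le1$ being immediate. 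So everything reduces to the connectivity statement, which I would prove by induction on $r$, the cases $r\le3$ being clear: $B(\F^2)$ is a nonempty set of lines and $B(\F^3)$ is a connected graph.

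For the inductive step I would fix a nonzero vector $p\in\F^r$ with line $\ell=\F p$, and inside $\mathcal{L}$ single out the subposets $P_\ell=\{W\mid\ell\subseteq W\}\subseteq Q=\{W\mid W+\ell\ne\F^r\}$. The order-preserving self-map $W\mapsto W+\ell$ of $Q$ is $\ge$ the identity, restricts to the identity on $P_\ell$, and factors through $P_\ell$; by the standard poset homotopy lemma this makes $|P_\ell|\hookrightarrow|Q|$ a homotopy equivalence, and since $P_\ell$ has least element $\ell$ the space $|Q|$ is contractible. The subspaces in $\mathcal{L}$ outside $Q$ are exactly the hyperplanes $H$ with $\ell\not\subseteq H$; these are mutually incomparable and maximal, so $B(\F^r)$ is built from $|Q|$ by attaching, for each such $H$, the closed star $\overline{\mathrm{st}}(H)$, which is the cone on $\mathrm{lk}(H)=|\{W\mid 0\ne W\subsetneq H\}|\cong B(\F^{r-1})$. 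The attaching locus is precisely $\mathrm{lk}(H)$, which lies in $|Q|$ because $\dim W\le r-2$ forces $W+\ell\ne\F^r$, and distinct such closed stars meet only inside $|Q|$; hence $B(\F^r)$ is an iterated mapping cone, starting from the contractible space $|Q|$, on copies of $B(\F^{r-1})$. Feeding in the inductive hypothesis that $B(\F^{r-1})$ is $(r-4)$-connected, a pass through the cofibre sequence (using van Kampen for $\pi_1$ and Hurewicz to convert homology vanishing back into connectivity) keeps every stage $(r-3)$-connected, and since connectivity is detected on compacta the same holds in the (possibly transfinite) colimit $B(\F^r)$; this completes the induction. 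As a by-product, tracking homology through these cofibre sequences and counting the $|\F|^{r-1}$ hyperplanes missing $\ell$ identifies $\tilde H_{r-2}(B(\F^r))$ as the Steinberg module, free of rank $|\F|^{\binom r2}$ when $\F$ is finite.

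I expect the only real content to be the choice of the contractible subcomplex $|Q|$: once $p$ is fixed, the observation that the subspaces $W$ with $W+\F p\ne\F^r$ deformation-retract onto those containing $\F p$ is what makes the induction go through. The remainder --- verifying that the closed stars of the hyperplanes missing $p$ meet the rest of the building exactly along subbuildings $B(\F^{r-1})$, and the connectivity bookkeeping for an iterated mapping cone --- should be routine. A cleaner but less self-contained route would be to invoke the shellability of the order complex of $\mathcal{L}$, which delivers the homotopy type $\bigvee_\alpha S^{r-2}$ at one stroke.
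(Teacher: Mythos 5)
The paper states the Solomon--Tits theorem as a classical result and gives no proof of its own, so there is no argument in the paper to compare against; your job here was in effect to supply a proof, and the one you give is correct. It is the standard ``contractible ball'' argument (Quillen's, in the style also found in Abels--Abramenko): the key move is to pick a line $\ell$, observe that the poset $Q=\{W : W+\ell\ne\F^r\}$ retracts (via $W\mapsto W+\ell$, which is $\ge\mathrm{id}$ and lands in the cone $P_\ell$) onto a contractible subposet, and note that what is left of $B(\F^r)$ consists of the closed stars of the hyperplanes missing $\ell$, each a cone on a copy of $B(\F^{r-1})$ attached along a subset of $|Q|$, with distinct stars meeting only inside $|Q|$. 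All the poset verifications you sketch check out, including that $\mathcal{L}\setminus Q$ consists exactly of the hyperplanes $H\not\supseteq\ell$, that $\overline{\mathrm{st}}(H)\cap|Q|=\mathrm{lk}(H)$, and the dimension count giving Steinberg rank $|\F|^{\binom{r}{2}}$ in the finite case.

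One streamlining worth noting: you do not actually need the detour through $(r-3)$-connectivity, Hurewicz, van Kampen, and the ``$(n-1)$-connected complex of dimension $n$ is a wedge of $n$-spheres'' lemma. Since $|Q|\hookrightarrow B(\F^r)$ is a cofibration with $|Q|$ contractible, collapsing it is a homotopy equivalence, and because each chain in $B(\F^r)$ meets at most one of the incomparable hyperplanes $H$, the quotient is literally
$$
B(\F^r)\;\simeq\; B(\F^r)/|Q|\;\cong\;\bigvee_{H\not\supseteq\ell}\overline{\mathrm{st}}(H)/\mathrm{lk}(H)\;\cong\;\bigvee_{H\not\supseteq\ell}\Sigma\,B(H),
$$
with $B(H)\cong B(\F^{r-1})$. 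The theorem then follows by a one-line induction ($\Sigma\bigvee S^{r-3}\simeq\bigvee S^{r-2}$), with no connectivity bookkeeping and no need to treat the attachments as an iterated or transfinite mapping cone --- the collapse handles all the stars at once. Your version is correct, just slightly longer than necessary; and your closing remark that shellability of the order complex gives the result ``at one stroke'' is accurate, at the cost of importing a bigger black box.
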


\begin{definition}
$$
\St_r(\F) = \tilde H_{r-2} B(\F^r) \cong \tilde H_r \Sigma^2 B(\F^r)
	\cong \bigoplus_\alpha \Z
$$
is the \emph{Steinberg representation} of $GL_r(\F)$.
\end{definition}

\begin{corollary}
The homology
$$
\tilde H_*( F_r K(\F)_1 / F_{r-1} K(\F)_1 )
	\cong \tilde H_* (\Sigma^2 B(\F^r)_{hGL_r(\F)})
	\cong H^{gp}_{*-r} ( GL_r(\F); \St_r(\F))
$$
is concentrated in degrees $*\ge r$.  Hence
$$
H_{j+1}(F_r K(\F)_1) \to H_{j+1} K(\F)_1
$$
is surjective for $j+1 = r$, and an isomorphism for $j+1 < r$.
Thus
$$
F_r H_{j+1} K(\F)_1 =
	\im ( H_{j+1}(F_r K(\F)_1) \to H_{j+1} K(\F)_1 )
$$
is equal to $H_{j+1} K(\F)_1$ for $r \ge j+1$.
\end{corollary}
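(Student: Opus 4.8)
The first isomorphism is induced by the homotopy equivalence of the Proposition above, so it remains to compute the reduced homology of the based homotopy orbit space $\Sigma^2 B(\F^r)_{hGL_r(\F)} = EGL_r(\F)_+ \wedge_{GL_r(\F)} \Sigma^2 B(\F^r)$. By the Solomon--Tits theorem $B(\F^r) \simeq \bigvee_\alpha S^{r-2}$, so $\Sigma^2 B(\F^r) \simeq \bigvee_\alpha S^r$ has reduced homology concentrated in degree~$r$, where it is $\St_r(\F)$ by definition. The homotopy-orbit spectral sequence of the based $GL_r(\F)$-space $\Sigma^2 B(\F^r)$,
$$
E^2_{p,q} = H^{gp}_p\bigl(GL_r(\F); \tilde H_q(\Sigma^2 B(\F^r))\bigr)
	\Longrightarrow \tilde H_{p+q}\bigl(\Sigma^2 B(\F^r)_{hGL_r(\F)}\bigr) \,,
$$
then has trivial coefficients except when $q = r$, so it collapses and gives
$$
\tilde H_n\bigl(F_r K(\F)_1 / F_{r-1} K(\F)_1\bigr) \cong H^{gp}_{n-r}\bigl(GL_r(\F); \St_r(\F)\bigr) \,,
$$
which vanishes for $n < r$ because group homology is concentrated in nonnegative degrees. (There is no equivariant wedge-of-spheres splitting of $\Sigma^2 B(\F^r)$, so it matters that the spectral sequence only sees the homology of the building, not a chain-level decomposition.)

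Next I would run the long exact homology sequences of the pairs $(F_r K(\F)_1, F_{r-1} K(\F)_1)$. These are pairs of CW subcomplexes, so $H_*(F_r K(\F)_1, F_{r-1} K(\F)_1) \cong \tilde H_*(F_r K(\F)_1 / F_{r-1} K(\F)_1)$ vanishes below degree~$r$, whence the transition map $\tilde H_n(F_{r-1} K(\F)_1) \to \tilde H_n(F_r K(\F)_1)$ is an isomorphism for $n \le r-2$ and an epimorphism for $n = r-1$. Since $K(\F)_1 = \colim_r F_r K(\F)_1$ is the increasing union of these subcomplexes, $\tilde H_n K(\F)_1 = \colim_r \tilde H_n(F_r K(\F)_1)$. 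Fix $n$. The transition map $\tilde H_n(F_{r-1} K(\F)_1) \to \tilde H_n(F_r K(\F)_1)$ is an isomorphism whenever $r \ge n+2$, so $\tilde H_n(F_r K(\F)_1) \to \tilde H_n K(\F)_1$ is an isomorphism for every $r \ge n+1$; composing the epimorphism $\tilde H_n(F_n K(\F)_1) \to \tilde H_n(F_{n+1} K(\F)_1)$ (the case $n = r-1$ with $r = n+1$) with this isomorphism then shows $\tilde H_n(F_n K(\F)_1) \to \tilde H_n K(\F)_1$ is an epimorphism. As $K(\F)_1$ is connected we have $\tilde H_{j+1} = H_{j+1}$, and setting $n = j+1$ gives the asserted surjectivity for $j+1 = r$ and bijectivity for $j+1 < r$.

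Finally, $F_r H_{j+1} K(\F)_1 = \im\bigl(H_{j+1}(F_r K(\F)_1) \to H_{j+1} K(\F)_1\bigr)$ equals all of $H_{j+1} K(\F)_1$ exactly when this map is onto, which by the previous paragraph holds for every $r \ge j+1$. I do not foresee a real obstacle: one only needs to track the degree shift in the homotopy-orbit spectral sequence and to chase the colimit index carefully; the substantive input is the collapse of that spectral sequence, which is immediate from the Solomon--Tits theorem.
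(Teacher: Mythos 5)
Your proof is correct, and since the paper states this corollary without an explicit proof, you are supplying the standard argument that the paper leaves implicit. The key inputs — the Proposition identifying $F_r K(\F)_1 / F_{r-1} K(\F)_1$ with the based homotopy orbit space, the Solomon--Tits theorem to compute $\tilde H_*(\Sigma^2 B(\F^r))$, and the homotopy orbit spectral sequence (collapsing because the coefficients are concentrated in a single degree) — are exactly the ones the surrounding text intends, and your filtration/colimit chase for the surjectivity and isomorphism range is the expected one. Two small points worth noting: the vanishing of $H^{gp}_{n-r}$ for $n < r$ needs only that group homology sits in nonnegative degrees, which you say; and your parenthetical about there being no equivariant splitting, so that the collapse must be argued via the spectral sequence rather than a chain-level decomposition, is a genuine and correctly placed caution.
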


This enters in the proof of the following theorem.

\begin{theorem}[Quillen]
Let $\O_F$ be the ring of integers in a number field~$F$.
For each $j\ge0$ the group $K_j(\O_F)$ is finitely generated.
\end{theorem}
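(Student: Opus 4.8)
The plan is to deduce finite generation of the homotopy groups $K_j(\O_F) = \pi_{j+1} K(\O_F)_1$ from finite generation of the integral homology groups of the space $K(\O_F)_1 = |i S_\bullet \P(\O_F)|$, and to prove the latter by working down the rank filtration, reducing at the end to finite generation of the homology of arithmetic groups with Steinberg coefficients, which reduction theory supplies. Throughout, the finiteness of the ideal class group $Cl(F)$ is used to keep certain sums finite, and the deep arithmetic input will be the Borel--Serre bordification.

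First I would reduce to homology. The spectrum $\K(\O_F)$ is connective and positive fibrant, so $K(\O_F)_1 \simeq \Omega^{n-1} K(\O_F)_n$ for all $n \ge 1$; in particular $K(\O_F)_1$ is a connected infinite loop space, hence a simple space, with $\pi_{j+1} = K_j(\O_F)$. By the mod-$\mathcal{C}$ Hurewicz and Whitehead theorems, applied with $\mathcal{C}$ the Serre class of finitely generated abelian groups --- which is legitimate because $K(\O_F)_1$ is simple --- it suffices to show that $H_n(K(\O_F)_1; \Z)$ is finitely generated for every $n \ge 0$. Equivalently, one may argue directly on the connective spectrum $\K(\O_F)$, where finitely generated homology in each degree forces finitely generated homotopy in each degree.

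Next comes the rank filtration. The filtration $* = F_0 K(\O_F)_1 \subset F_1 K(\O_F)_1 \subset \dots$ by the dimension of the top term, and the identification of its subquotients, carry over from $\P(\F)$ to $\P(\O_F)$, the one change being that the rank-$r$ objects of $i S_\bullet \P(\O_F)$ now fall into finitely many isomorphism classes $[P]$, by finiteness of $Cl(F)$. One obtains
$$
\tilde H_*\bigl( F_r K(\O_F)_1 / F_{r-1} K(\O_F)_1 \bigr)
  \;\cong\; \bigoplus_{[P],\, \operatorname{rk} P = r}
  H^{gp}_{*-r}\bigl( \operatorname{Aut}(P); \St(P) \bigr) \,,
$$
a finite direct sum, concentrated in degrees $* \ge r$, where $\St(P) = \tilde H_{r-2} B(P \otimes_{\O_F} F)$ is the Steinberg module of the underlying $F$-vector space of $P$ (the relevant sub-objects being $F$-subspaces, so that Solomon--Tits still applies). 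As in the corollary, $H_n(K(\O_F)_1) = F_n H_n(K(\O_F)_1)$, and this group carries a finite filtration whose successive subquotients are subquotients of the finite sums $\bigoplus_{[P],\, \operatorname{rk} P = r} H^{gp}_{n-r}(\operatorname{Aut}(P); \St(P))$ for $1 \le r \le n$. Hence $H_n(K(\O_F)_1)$ is finitely generated as soon as each $H^{gp}_m(\operatorname{Aut}(P); \St(P))$ is.

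The hard part, and the one place where the arithmetic of $\O_F$ enters non-trivially, is to show that $H_m(\operatorname{Aut}(P); \St(P))$ is a finitely generated abelian group for all $m$. For this I would invoke reduction theory. The group $\operatorname{Aut}(P)$ is an arithmetic subgroup of $GL_r(F)$, and the Borel--Serre bordification $\overline X$ of the associated symmetric space exhibits it as a virtual duality group whose dualizing module is a twist of the Steinberg module $\St(P)$, the boundary $\partial\overline X$ being $\operatorname{Aut}(P)$-homotopy equivalent to the Tits building. Passing to a torsion-free normal subgroup $\Gamma' \le \operatorname{Aut}(P)$ of finite index (Minkowski/Selberg), $\Gamma'$ is an honest duality group of dimension $\nu = \operatorname{vcd}$, so duality gives $H_m(\Gamma'; \St(P)) \cong H^{\nu - m}(\Gamma'; \Z)$; and since $\Gamma'$ acts cocompactly on the contractible manifold-with-corners $\overline X$, it is of type $FP_\infty$ and these cohomology groups are finitely generated. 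The Lyndon--Hochschild--Serre spectral sequence of $1 \to \Gamma' \to \operatorname{Aut}(P) \to \operatorname{Aut}(P)/\Gamma' \to 1$, with finite quotient, then propagates finite generation from $\Gamma'$ to $\operatorname{Aut}(P)$ in each total degree. The case $j = 0$, where $K_0(\O_F) \cong \Z \oplus Cl(F)$ with $Cl(F)$ finite, is classical, and is in any case recovered from the rank-one stratum above. I expect the Borel--Serre input to be the main obstacle; the remaining steps --- the rank filtration bookkeeping and the Serre-class argument --- are routine.
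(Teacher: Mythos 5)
The paper does not actually prove this theorem: it is stated as Quillen's result, with only the remark that the preceding corollary on the (space-level) rank filtration ``enters in the proof,'' so there is no in-text argument to compare yours against. Your reconstruction is, however, essentially Quillen's original argument (\emph{Finite generation of the groups $K_i$ of rings of algebraic integers}, LNM 341), transposed from the $Q$-construction to the $S_\bullet$-construction used here, and it correctly fills in the strategy the paper alludes to. The steps all check out: the reduction from homotopy to homology by the mod-$\mathcal C$ Hurewicz theorem with $\mathcal C$ the Serre class of finitely generated abelian groups is valid because $K(\O_F)_1$ is a loop space and hence simple; the rank filtration carries over to $\P(\O_F)$ with layers indexed by the finitely many isomorphism classes of rank-$r$ projectives, which is where finiteness of $Cl(F)$ enters; and your observation that the poset of admissible sub-objects (direct summands) of a rank-$r$ projective $P$ is isomorphic to the Tits building of $P\otimes_{\O_F} F$, via $Q\mapsto Q\otimes F$ with inverse $W\mapsto W\cap P$ (using that $\O_F$ is Dedekind), is exactly right and lets Solomon--Tits apply verbatim, giving the layers $\Sigma^2 B(P\otimes F)_{h\operatorname{Aut}(P)}$ with homology $H^{gp}_{*-r}(\operatorname{Aut}(P);\St(P))$. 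The arithmetic input you invoke --- pass to a torsion-free finite-index subgroup $\Gamma'$, use that $\overline X/\Gamma'$ is compact so $\Gamma'$ is of type $FP_\infty$ with finitely generated cohomology, apply Borel--Serre duality to convert $H_m(\Gamma';\St)$ into $H^{\nu-m}(\Gamma';\Z)$, then run Lyndon--Hochschild--Serre over the finite quotient --- is the standard route and is what Quillen cites. The only caveats are routine: the dualizing module is $\St$ only up to an orientation character (harmless for finite generation), and one should note $\operatorname{Aut}(P)$ is commensurable with $GL_r(\O_F)$ so the Borel--Serre theory applies to it in the first place. Overall this is a correct and faithful reconstruction of the proof the paper is pointing to.
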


The connectivity conjecture asserts a stronger convergence result, namely
$F_r K_j(\F) = K_j(\F)$ for $2r \ge j+1$, but for the more powerful
stable rank filtration.

\section{The spectrum level rank filtration}

Also recall that $\K(\F) = \{ n \mapsto K(\F)_n
	= | i S^{(n)}_\bullet \P(\F) | \}$
where $i S^{(n)}_\bullet \P(\F)$ has $q$-simplices the category with
objects
\begin{align*}
\sigma \: [q]^n &\to \P(\F) \\
(i_1, \dots, i_n) &\mapsto V_{i_1, \dots, i_n}
\end{align*}
plus choices of subquotients, subject to lists of conditions.

\begin{definition}[Rognes (1992)]
Let $F_r K(\F)_n \subset K(\F)_n$ be the subspace where $\dim V_{q, \dots, q}
\le r$ (so that $\dim V_{i_1, \dots, i_n} \le r$ for all $(i_1, \dots, i_n)$).
Let
$$
F_r \K(\F) = \{n \mapsto F_r K(\F)_n\}
$$
be the associated (pre-)spectrum.  The sequence
$$
* \cof F_1 \K(\F) \cof \dots \cof F_{r-1} \K(\F) \cof F_r \K(\F) \cof
	\dots \cof \K(\F)
$$
is the \emph{spectrum level rank filtration}.
\end{definition}

Recall that $\pi_j \X = \colim_n \pi_{j+n} X_n$ for a prespectrum
$\X = \{n \mapsto X_n\}$.

\begin{definition}
Let
$$
F_r K_j(\F) = \im ( \pi_j F_r \K(\F) \to \pi_j \K(\F) )
$$
so that
$$
0 \subset F_1 K_j(\F) \subset \dots
	\subset F_r K_j(\F) \subset \dots \subset K_j(\F) \,.
$$
This is the \emph{stable rank filtration}.
\end{definition}

\begin{proposition}
$$
F_r \K(\F) / F_{r-1} \K(\F) \simeq \D(\F^r)_{hGL_r(\F)}
	= EGL_r(\F)_+ \wedge_{GL_r(\F)} \D(\F^r)
$$
is the homotopy orbit spectrum for $GL_r(\F)$ acting on the
\emph{stable building} $\D(\F^r)$.
\end{proposition}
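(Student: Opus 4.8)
The plan is to run the argument of the space-level Proposition above one spectrum level at a time, and then to commute the homotopy orbit construction past the passage from spaces to spectra.

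Fix $n\ge1$. A non-basepoint $q$-simplex of $F_rK(\F)_n/F_{r-1}K(\F)_n$ is generated from an admissible $n$-cubical diagram $\sigma\:[q]^n\to\P(\F)$ with $\dim V_{q,\dots,q}=r$, since a diagram with $\dim V_{q,\dots,q}\le r-1$ lies in $F_{r-1}K(\F)_n$ and represents the base point. Exactly as when $n=1$, a choice of isomorphism $V_{q,\dots,q}\cong\F^r$ is a point of a $GL_r(\F)$-torsor, so this quotient is identified with the based homotopy orbit space $(D(\F^r)_n)_{hGL_r(\F)}$, where $D(\F^r)_n$ is the based $GL_r(\F)$-space whose non-basepoint $q$-simplices are the admissible $n$-cubes $[q]^n\to\P(\F)$ with $V_{q,\dots,q}=\F^r$ on the nose, a simplex being collapsed to the base point along those initial and terminal cubical faces for which the required normalization ($0$ along the initial subcubes, $\F^r$ at the terminal vertex) is not already in force. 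For $n=1$ this space $D(\F^r)_1$ is the double suspension $\Sigma^2B(\F^r)$ of the Tits building.

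Next I would assemble $\D(\F^r)=\{n\mapsto D(\F^r)_n\}$. The structure maps of $\K(\F)$ preserve the sub-prespectrum $F_r\K(\F)=\{n\mapsto F_rK(\F)_n\}$, hence pass to the quotient prespectrum $F_r\K(\F)/F_{r-1}\K(\F)$, and under the level-wise identification above they become $GL_r(\F)$-equivariant maps $D(\F^r)_n\to\Omega D(\F^r)_{n+1}$. Thus $\D(\F^r)$ is a prespectrum with a $GL_r(\F)$-action, symmetric with $\Sigma_n$ permuting the $n$ cubical directions — and one expects these maps to be equivalences in positive levels, as for $\K(\F)$ itself. This spectrum is the stable building. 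Since $(-)_{hGL_r(\F)}=EGL_r(\F)_+\wedge_{GL_r(\F)}(-)$ commutes with smash products, geometric realizations and sequential colimits, it commutes with the formation of $\D(\F^r)$ out of its spaces, so
$$
F_r\K(\F)/F_{r-1}\K(\F)\;=\;\{n\mapsto(D(\F^r)_n)_{hGL_r(\F)}\}\;\simeq\;\D(\F^r)_{hGL_r(\F)}\,,
$$
as claimed.

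The homotopy-orbit step is formal. The main obstacle is the construction of $D(\F^r)_n$ together with the spectrum structure on $\D(\F^r)$: one must pin down $D(\F^r)_n$ so that the several boundary collapses in the $n$ cubical directions, now interacting with the admissibility conditions on all $d$-dimensional subcubes of $[q]^n$, are correctly accounted for, and then check that these spaces really do assemble into a spectrum --- i.e.\ that the structure maps of $\K(\F)$ induce the asserted maps on the associated graded after rigidifying, compatibly with the symmetric group actions.
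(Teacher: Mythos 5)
Your proposal is correct and follows essentially the same route as the paper's sketch: identify the level-$n$ quotient $F_rK(\F)_n/F_{r-1}K(\F)_n$ as the based $GL_r(\F)$-homotopy orbits of the subspace of $n$-cubes with $V_{q,\dots,q}=\F^r$ on the nose, then pass to prespectra and commute homotopy orbits through. The paper's two-sentence sketch leaves the spectrum-level and basepoint-collapse bookkeeping implicit; you spell these out (and candidly flag where the real work lies), but the underlying argument is the same.
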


\begin{proof}[Sketch proof]
At level~$n$, $F_r K(\F)_n / F_{r-1} K(\F)_n$ realizes a simplicial
category with $q$-simplices diagrams
$$
\sigma \: (i_1, \dots, i_n) \mapsto V_{i_1, \dots, i_n}
$$
with $\dim V_{q, \dots, q} = r$.  It is equivalent to the subcategory
where $V_{q, \dots, q} = \F^r$ and each $V_{i_1, \dots, i_n}$ is a
subspace of $\F^r$, with morphisms given by the $GL_r(\F)$-action on $\F^r$
and its subspaces.
\end{proof}

\begin{definition}
We define $\D(\F^r) = \{n \mapsto D(\F^r)_n\}$ by letting $D(\F^r)_n$
be a simplicial set with $q$-simplices diagrams $\sigma \: [q]^n \to
\Sub(\F^r) \subset \P(\F)$ consisting of subspaces $V_{i_1, \dots, i_n}$
of $\F^r$ and inclusions between these.  The case $n=2$ appears as follows:
$$
\xymatrix{
0 \ar@{}[r]|{=} \ar@{}[d]|{||}
& 0 \ar@{}[r]|{=} \ar@{}[d]|{\cap}
& \dots \ar@{}[r]|{=}
& 0 \ar@{}[d]|{\cap} \\
0 \ar@{}[r]|{\subset} \ar@{}[d]|{||}
& V_{1,1} \ar@{}[r]|{\subset} \ar@{}[d]|{\cap}
& \dots \ar@{}[r]|{\subset}
& V_{1,q} \ar@{}[d]|{\cap} \\
\vdots \ar@{}[d]|{||}
& \vdots \ar@{}[d]|{\cap}
& \ddots
& \vdots \ar@{}[d]|{\cap} \\
0 \ar@{}[r]|{\subset}
& V_{q,1} \ar@{}[r]|{\subset}
& \dots \ar@{}[r]|{\subset}
& V_{q,q}
}
$$
with $\sigma \: (i,j) \mapsto V_{i,j}$.
In general we require (0) that $V_{i_1, \dots, i_n} = 0$ if some $i_s = 0$,
and $V_{q, \dots, q} = \F^r$, (1) that
$V_{i_1, \dots, i_s-1, \dots, i_n} \subset V_{i_1, \dots, i_s, \dots, i_n}$
is an inclusion, (2) that the pushout morphism
$$
V_{\dots, i_s-1, \dots, i_t, \dots} \oplus_{V_{\dots, i_s-1, \dots, i_t-1, \dots}}
V_{\dots, i_s, \dots, i_t-1, \dots} \to
V_{\dots, i_s, \dots, i_t, \dots}
$$
is injective, etc. (to $(n)$).
We call these the \emph{lattice conditions}.
\end{definition}

\begin{example}
$\D(\F^1) \cong \S$ (the sphere spectrum), so
$F_1 \K(\F) \simeq \S_{hGL_1(\F)} = \Sigma^\infty (B\F^\times)_+$.
Rationally, $\pi_j F_1 \K(\F) \cong \pi_j^S(B\F^\times_+)$
is isomorphic to $H_j(B\F^\times) = H^{gp}_j(\F^\times)$, which
is also rationally isomorphic to $\Lambda^j \F^\times$.  Hence
$F_1 K_j(\F) \subset K_j(\F)$ agrees rationally with the image
of Milnor $K$-theory:
$$
F_1 K_j(\F)_\Q = K^M_j(\F)_\Q
$$
as subgroups of $K_j(\F)_\Q$.
\end{example}

\section{The component filtration}

To analyze the stable building $\D(\F^r)$ we associate some invariants
to the simplices $\sigma \: [q]^n \to \Sub(\F^r)$.

\begin{definition}
The \emph{rank jump} at $\vec p = (i_1, \dots, i_n) \in [q]^n$ is
the dimension of the cokernel of the $n$-cube pushout morphism to
$V_{i_1, \dots, i_n}$, i.e., the alternating sum
$$
\sum_{\epsilon_1, \dots, \epsilon_n \in \{0,1\}}
(-1)^{\epsilon_1 + \dots + \epsilon_n}
	\dim V_{i_1 - \epsilon_1, \dots, i_n - \epsilon_n} \,.
$$
It is non-negative by the lattice conditions, and the sum over all $\vec
p$ of the rank jumps is $r = \dim V_{q,\dots,q}$.  Hence there are $r$
distinguished points $\vec p_1, \dots, \vec p_r \in [q]^n$, counted
with multiplicities, where the rank jumps are positive.
\end{definition}

(The ordering of $\vec p_1, \dots, \vec p_r$ is not well-defined.)

A preordering is a reflexive and transitive relation.  It amounts to a
small category with at most one morphism from $i$ to $j$ for each pair
of objects $(i,j)$.

\begin{definition}
The $r$ distinguished points $\vec p_1, \dots, \vec p_r$ inherit a
preordering from the product partial ordering on $[q]^n$.  Let the
\emph{path component count} of $\sigma$, denoted $c(\sigma)$, be
the number of path components of (the classifying space of the category
associated to) this preordering.  Clearly $1 \le c(\sigma) \le r$.
\end{definition}


Face operators in $D(\F^r)_n$ may merge distinguished points, which in
turn may reduce the path component count.

\begin{definition}
Let $F_c D(\F^r)_n \subset D(\F^r)_n$ be the simplicial subset consisting
of simplices $\sigma$ with path component count $c(\sigma) \le c$.
Let
$$
F_c \D(\F^r) = \{n \mapsto F_c D(\F^r)_n\}
$$
be the associated (pre-)spectrum.  The sequence
$$
* \cof F_1 \D(\F^r) \cof \dots \cof F_{c-1} \D(\F^r) \cof F_c \D(\F^r)
\cof \dots \cof F_r \D(\F^r) = \D(\F^r)
$$
is the \emph{component filtration} of the stable building $\D(\F^r)$.
\end{definition}

\begin{example}
$F_1 \D(\F^r) \simeq \Sigma^\infty \Sigma B(\F^r) \simeq \bigvee_\alpha
\S^{r-1}$.
\end{example}

\begin{theorem}
$$
F_c \D(\F^r) / F_{c-1} \D(\F^r) \simeq \bigvee_\beta \S^{r+c-2}
$$
for $1 \le c \le r$.
\end{theorem}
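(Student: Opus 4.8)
The plan is to reduce the statement to a homology computation and then to evaluate that homology by stratifying the subquotient along the combinatorial structure of the distinguished points and applying the Solomon--Tits theorem one comparability component at a time.

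As a preliminary reduction, note that a connective spectrum $X$ with $\tilde H_*(X;\Z)$ free and concentrated in a single degree $d$ is stably equivalent to a wedge $\bigvee_\beta\S^d$: a map from such a wedge to $X$ that is an isomorphism on $H_d$ is then a homology isomorphism, hence an equivalence. So it suffices to show that $\tilde H_*\bigl(F_c\D(\F^r)/F_{c-1}\D(\F^r);\Z\bigr)$ is free and concentrated in degree $r+c-2$; connectivity is clear from the high connectivity of the spaces $F_cD(\F^r)_n/F_{c-1}D(\F^r)_n$, and freeness will come from the freeness of the Steinberg modules $\St_m(\F)$ that appear below.

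To compute this homology I would stratify the simplices. A non-basepoint $q$-simplex $\sigma\colon[q]^n\to\Sub(\F^r)$ of $F_cD(\F^r)_n/F_{c-1}D(\F^r)_n$ has its $r$ distinguished points partitioned into exactly $c$ comparability components $C_1,\dots,C_c$, with rank-jump sums $r_1,\dots,r_c\ge1$ and $r_1+\dots+r_c=r$; since degeneracies preserve and face operators can only decrease the number of components (by merging distinguished points), these data organize $F_cD(\F^r)_n/F_{c-1}D(\F^r)_n$ into a finite filtration indexed by the possible configurations. Fix one layer. Normalizing $V_{q,\dots,q}=\F^r$ and choosing mutually complementary subspaces $W_1,\dots,W_c\subset\F^r$ with $\dim W_i=r_i$, spanned by the ``new directions'' of the respective components, the simplex $\sigma$ is determined, up to contractible choice, by a chain of subspaces of $W_i$ (the successive partial sums of the rank jumps along $C_i$) for each $i$. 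Writing $\Delta(P)$ for the order complex of a poset $P$ and $\bar P$ for its proper part, this exhibits the layer, stably, as a suspension of $\Delta\bigl(\overline{\Sub(W_1)\times\dots\times\Sub(W_c)}\bigr)$.

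Now I would compute. By the homotopy identity for proper parts of products of bounded posets,
$$
\Delta\bigl(\overline{\Sub(W_1)\times\dots\times\Sub(W_c)}\bigr)\simeq\Sigma^{c-1}\bigl(B(\F^{r_1})\ast\dots\ast B(\F^{r_c})\bigr),
$$
and Solomon--Tits gives $B(\F^{r_i})\simeq\bigvee S^{r_i-2}$, so this complex is a wedge of spheres of dimension $(r_1-2)+\dots+(r_c-2)+(c-1)+(c-1)=r-2$. The remaining suspension shifts --- those contributed by the $n$ cube directions and by the basepoint conventions $V_{i_1,\dots,i_n}=0$ (when some $i_s=0$) and $V_{q,\dots,q}=\F^r$ --- supply a further $\Sigma^c$ in total, one suspension per component, as is already witnessed for $c=1$ by the discrepancy between $B(\F^r)$ and $F_1\D(\F^r)\simeq\Sigma^\infty\Sigma B(\F^r)$; this moves each layer into stable degree $(r-2)+c=r+c-2$. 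Hence each layer has free homology concentrated in degree $r+c-2$, the spectral sequence of the filtration collapses for degree reasons, and $F_c\D(\F^r)/F_{c-1}\D(\F^r)$ has free homology concentrated in degree $r+c-2$; the preliminary reduction completes the proof.

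The main obstacle is the identification of a layer in the third paragraph together with the suspension bookkeeping in the fourth. One must check that a simplex with prescribed distinguished-point data really is pinned down, up to contractible choice, by the chains of subspaces in the $c$ blocks --- this is where the lattice conditions do their work --- and one must combine precisely the contributions of the $n$ cubical directions, the two cone-point conventions, and the $\Sigma^{c-1}$ from the product-of-lattices identity, so that the homology lands in degree $r+c-2$ rather than a neighbouring one. The case $c=1$ calibrates the count, and the general case should follow it block by block.
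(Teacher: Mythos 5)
Your plan is in the same spirit as the paper's: both stratify the subquotient by the combinatorial type of the $r$ distinguished points and then invoke Solomon--Tits, with join/smash bookkeeping supplying the shift to degree $r+c-2$. But the paper's sketch is more refined in ways that matter, and the gaps it fills are exactly the places you flag as obstacles.

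The paper filters $\D(\F^r)$ by the full isomorphism class of the \emph{preorder} on $\{1,\dots,r\}$ induced by $s\preceq t$ iff $\vec p_s\le\vec p_t$. This is strictly finer than your stratification by $(c;r_1,\dots,r_c)$. The distinction is essential: within a single one of your layers there are many preorder types, and the paper's key step is to show that \emph{only the componentwise (pre-)linear preorders} give stably nontrivial subquotients, while all other preorders contribute trivially. Your argument silently assumes each component carries a chain of subspaces (``the successive partial sums of the rank jumps along $C_i$''), which is precisely the pre-linear case; you never rule out the non-linear preorders, so you cannot actually say that a layer is ``pinned down, up to contractible choice,'' by flag data in $W_1,\dots,W_c$.

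Your identification of a layer with $\Sigma\bigl(\Delta(\overline{\Sub(W_1)\times\dots\times\Sub(W_c)})\bigr)$ also omits the \emph{configuration-space} contribution that the paper explicitly singles out. The positions of the $r$ distinguished points in $[q]^n$ (stabilized over $n$) are not a contractible choice: for $c>1$ they organize into configuration spaces whose stable homology produces the Lie representations appearing in $Z_{2r-2}=\Z[GL_r(\F)/T_r]\otimes_{\Sigma_r}\Lie_r^*$. Your ``$\Sigma^c$, one suspension per component'' bookkeeping cannot recover this structure, and calibrating against $c=1$ alone does not test it, since the configuration-space phenomenon is invisible when $c=1$. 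Relatedly, your layer is a single complex, not visibly a wedge, whereas the wedge over $\beta$ (and the $GL_r$-module structure) comes from summing over the choices of flag data, block decomposition, and configuration, which your ``up to contractible choice'' conflates.

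So the overall route is the right one, and the arithmetic of the join formula $\Delta(\overline{P_1\times\dots\times P_c})\simeq\Sigma^{c-1}\bigl(\Delta(\bar P_1)\ast\dots\ast\Delta(\bar P_c)\bigr)$ with Solomon--Tits does land in degree $r+c-2$. But the proof as written has a genuine gap: you need (i) the finer preorder filtration, (ii) a vanishing argument for the non-pre-linear preorders, and (iii) a precise account of the configuration-space factor, before the spectral-sequence-collapse argument at the end can be run. These are the nontrivial content of the 1992 reference, not items that can be absorbed into ``contractible choice.''
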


\begin{proof}[Sketch proof]
There is a finer filtration of $\D(\F^r)$ (than the component filtration)
given by restricting the (isomorphism classes of) preorders on $\{1,
\dots, r\}$ given by setting $s \preceq t$ if $\vec p_s \le \vec p_t$.
The filtration subquotients of this \emph{preorder filtration} can be completely
analyzed, in terms of configuration spaces and smash products of Tits
buildings.  The preorders that are not componentwise (pre-)linear
contribute stably trivial filtration subquotients.  The stable homology of
configuration spaces contributes Lie representations, and the
smash products of Tits building contribute tensor products of
Steinberg representations.  See [Rognes (1992)] for details.
\end{proof}

Hence $H_* \D(\F^r)$ is the homology of a free chain complex
$$
0 \to Z_{2r-2} \overset{\partial}\to \dots
	\overset{\partial}\to Z_{r-1} \to 0 \,,
$$
with
$$
Z_{r+c-2} = H_{r+c-2}( F_c \D(\F^r) / F_{c-1} \D(\F^r) )
	\quad (\cong \bigoplus_\beta \Z)
$$
for $1 \le c \le r$.  In particular,
$$
Z_{2r-2} = \Z[GL_r(\F)/T_r] \otimes_{\Sigma_r} \Lie_r^*
$$
and $Z_{r-1} = \St_r(\F)$.
Here $T_r \subset GL_r(\F)$ is the diagonal torus, and $\Lie_r^*$ is
the dual of the Lie representation of the symmetric group $\Sigma_r$.
The group $GL_r(\F)$ acts naturally on this complex.

\begin{corollary}
$H_* \D(\F^r)$ is concentrated in the range $r-1 \le * \le 2r-2$.
\end{corollary}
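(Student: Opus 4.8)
The plan is to extract the statement directly from the component filtration together with the preceding theorem. Recall that $\D(\F^r)$ is exhausted by the \emph{finite} filtration
$$
* = F_0 \D(\F^r) \cof F_1 \D(\F^r) \cof \dots \cof F_r \D(\F^r) = \D(\F^r) \,,
$$
and that the theorem identifies each subquotient as $F_c \D(\F^r)/F_{c-1} \D(\F^r) \simeq \bigvee_\beta \S^{r+c-2}$ for $1 \le c \le r$. Hence $H_*(F_c \D(\F^r)/F_{c-1} \D(\F^r))$ is a free abelian group concentrated in the single degree $r+c-2$, and as $c$ runs from $1$ to $r$ this degree runs over exactly the integers $r-1, r, \dots, 2r-2$.

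First I would run an induction on $c$ using the cofiber sequences $F_{c-1}\D(\F^r) \cof F_c \D(\F^r) \to F_c\D(\F^r)/F_{c-1}\D(\F^r)$. The base case $c=1$ is the Example above: $F_1 \D(\F^r) \simeq \Sigma^\infty \Sigma B(\F^r) \simeq \bigvee_\alpha \S^{r-1}$, with homology in degree $r-1$ only. For the inductive step, assume $H_*(F_{c-1}\D(\F^r))$ is concentrated in degrees $r-1 \le * \le r+c-3$; the long exact homology sequence of the cofiber sequence, together with the concentration of $H_*(F_c\D(\F^r)/F_{c-1}\D(\F^r))$ in degree $r+c-2$, then forces $H_*(F_c\D(\F^r))$ to be concentrated in degrees $r-1 \le * \le r+c-2$. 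Taking $c=r$ yields the corollary. Equivalently — and this is exactly the chain complex displayed just before the corollary — the spectral sequence of the filtration has $E^1$-term $H_*(F_c \D(\F^r)/F_{c-1}\D(\F^r))$ supported in total degree $r+c-2$, so it degenerates to the single complex $0 \to Z_{2r-2} \overset{\partial}\to \dots \overset{\partial}\to Z_{r-1} \to 0$; a chain complex supported in degrees $[r-1,2r-2]$ has homology supported there.

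No step here is a genuine obstacle: all the difficulty has been absorbed into the theorem computing $F_c\D(\F^r)/F_{c-1}\D(\F^r)$, whose proof (via the finer preorder filtration, configuration spaces, and smash products of Tits buildings) is the substantive input. The only points requiring a little care are that the component filtration is by cofibrations of (pre-)spectra, so that the cofiber sequences and the associated long exact sequences and spectral sequence are valid, and that the filtration has finite length $r$, which makes convergence automatic. With these observations in hand the corollary is bookkeeping, and in particular it makes no claim of non-vanishing at the extreme degrees $r-1$ and $2r-2$ (where $Z_{r-1} = \St_r(\F)$ and $Z_{2r-2} = \Z[GL_r(\F)/T_r] \otimes_{\Sigma_r} \Lie_r^*$ are nonzero, but could in principle contribute to boundaries/cycles only).
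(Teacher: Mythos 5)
Your argument is correct and is exactly the reasoning the paper uses: it states the corollary immediately after observing that $H_*\D(\F^r)$ is the homology of the free chain complex $Z_*$ supported in degrees $r-1 \le * \le 2r-2$, which is obtained from the component filtration and the theorem on the subquotients just as you describe. Your two phrasings (induction along the cofiber sequences and the degenerate filtration spectral sequence) both simply spell out what the paper leaves as immediate.
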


\section{The connectivity conjecture}

In [Rognes (1992)] we made the following conjecture.

\begin{conjecture}[Connectivity]
$H_* \D(\F^r)$ is concentrated in degree~$(2r-2)$.
\end{conjecture}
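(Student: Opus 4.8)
The plan is to reduce the conjecture to an acyclicity statement about the explicit chain complex $Z_\bullet$ produced above, and then to establish that acyclicity by decoupling two classical ingredients: the homology of Tits buildings (Solomon--Tits) and the homology of the partition poset (Koszulness of the Lie operad). By the Theorem computing the subquotients $F_c\D(\F^r)/F_{c-1}\D(\F^r)\simeq\bigvee_\beta\S^{r+c-2}$, the homology $H_*\D(\F^r)$ is the homology of the free chain complex $0\to Z_{2r-2}\to\dots\to Z_{r-1}\to 0$ displayed before the Corollary, and by that Corollary it is already concentrated in degrees $r-1\le*\le 2r-2$. Hence the conjecture is equivalent to the assertion that $Z_\bullet$ is exact in every degree strictly below $2r-2$.

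The next step is to make the differential on $Z_\bullet$ explicit using the finer preorder filtration. Sharpening the sketch given for the subquotient theorem, I would index $Z_{r+c-2}$ by the set partitions $\{1,\dots,r\}=B_1\sqcup\dots\sqcup B_c$ of the $r$ distinguished points into $c$ components (each ``fully stacked'' into singleton levels), so that the corresponding summand is induced from $\Lie_c^*\otimes\bigotimes_i\St_{|B_i|}(\F)$; at the ends this reproduces $Z_{r-1}=\St_r(\F)$ and $Z_{2r-2}=\Z[GL_r(\F)/T_r]\otimes_{\Sigma_r}\Lie_r^*$, as stated. The differential merges two components, and decomposes as the sum of (i) the bar (partition-poset) boundary acting on the $\Lie_c^*$-coefficients and (ii) the parabolic multiplication $\St_a(\F)\otimes\St_b(\F)\to\St_{a+b}(\F)$ on flags refining a two-step flag in $\F^{a+b}$. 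Thus $Z_\bullet$ is a Koszul-type complex interpolating between $\St_r(\F)$ and the ``torus $+$ Lie'' module.

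Granting this, the conjecture follows from two classical inputs plus a decoupling argument. Solomon--Tits gives that $\tilde H_*B(\F^n)$ is concentrated in degree $n-2$, and the Koszulness of the Lie operad --- equivalently, that $|\Pi_c|$ is a wedge of $(c-3)$-spheres with $\tilde H_{c-3}\Pi_c\cong\Lie_c\otimes\mathrm{sgn}$ --- gives the analogous concentration for the partition poset. One then either (a) filters $Z_\bullet$ suitably, so that the associated graded splits, over each partition type, as an equivariant tensor product of shifted Steinberg complexes of the blocks with the shifted chain complex of the partition poset on the block set, whence the $E^1$-page of the resulting spectral sequence lies on a single line and the sequence collapses; or (b) recognizes $Z_\bullet$ as the Koszul complex of the graded algebra $\bigoplus_n\St_n(\F)$ under parabolic multiplication and deduces acyclicity from Koszulness of that algebra, for which Solomon--Tits supplies the relations and the Lie operad the Koszul dual. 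Either way $H_*\D(\F^r)$ is concentrated in degree $2r-2$. Everything is natural for $GL_r(\F)$ and the field enters only through the $GL_n(\F)$ and their Steinberg modules, so the combinatorial skeleton of the argument is the ``field with one element'' case, which is exactly the known fact that the partition complex is a wedge of top-dimensional spheres. (An induction on $r$ via a cofibre sequence relating $\D(\F^r)$ to the $\D(\F^k)$ with $k<r$ would be an alternative, but looks harder to reconcile with the multisimplicial spectrum structure.)

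The main obstacle is that the two directions --- ``merge components'' and the Steinberg-multiplication that it triggers --- do not decouple in $Z_\bullet$ itself: the actual differential mixes them, and any decoupling must be engineered, either through exactly the right filtration or through an explicit Koszul duality. Concretely this needs (a) the $GL_r(\F)$-equivariant description of the middle terms $Z_{r+c-2}$ and of their differential pinned down precisely enough to exhibit the $\Lie$-coefficients and the parabolic multiplication --- not merely up to an unidentified automorphism of $\Lie_c$ --- which the proof of the subquotient theorem only sketches and defers to [Rognes (1992)]; and (b) the vanishing of the higher spectral-sequence (or the exactness of the Koszul) differentials, which is formal once the relevant page or dual is identified but rests on (a) being exact. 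Controlling the Lie representation in the presence of the $GL_r(\F)$-action, and matching the configuration-space differential with the partition-poset boundary, is where the real work lies.
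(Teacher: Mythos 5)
This statement is a conjecture in the paper, not a theorem: the paper itself offers no proof, and the theorem attributed to Rognes establishes it only for $r = 1$, $2$, and~$3$. So any complete argument for general $r$ would be a new result rather than a reconstruction of an existing one. Your reduction to exactness of the chain complex $Z_\bullet$ below degree $2r-2$, and the identification of the endpoints $Z_{r-1} = \St_r(\F)$ and $Z_{2r-2} = \Z[GL_r(\F)/T_r]\otimes_{\Sigma_r}\Lie_r^*$, are consistent with what the paper records, as is the heuristic that the middle terms $Z_{r+c-2}$ should be built from Steinberg modules of the blocks and $\Lie$-coefficients indexed by set partitions of $\{1,\dots,r\}$ into $c$ blocks.

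The gap, which you flag yourself, is precisely the open content: the differential on $Z_\bullet$ genuinely couples the partition-poset direction to the parabolic multiplication $\St_a(\F)\otimes\St_b(\F)\to\St_{a+b}(\F)$, and neither proposed decoupling has been carried out. For route (a) you have not exhibited the filtration, nor verified that its associated graded splits, $GL_r(\F)$-equivariantly, as a tensor product of shifted Steinberg complexes with the shifted partition-poset complex, with the degree shifts landing on a single total degree $2r-2$ --- absent that, the ``single line, hence collapse'' conclusion does not follow. For route (b), Koszulness of $\bigoplus_n \St_n(\F)$ under parabolic multiplication is itself an open assertion whose proof would be essentially of the same difficulty as the conjecture. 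Even the precise equivariant form of $Z_{r+c-2}$ as induced from $\Lie_c^*\otimes\bigotimes_i\St_{|B_i|}(\F)$ and the matching of the differential with the partition-poset boundary are asserted rather than derived from the preorder filtration in [Rognes (1992)]. What you have is a plausible strategy and an honest accounting of what it would take, not a proof; the paper leaves the general case open for exactly the reasons you identify.
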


Equivalently, the complex
$$
0 \to H_{2r-2} \D(\F^r) \to Z_{2r-2} \overset{\partial}\to \dots
\overset{\partial}\to Z_{r-1} \to 0
$$
is exact, $\D(\F^r)$ is $(2r-3)$-connected, and $\D(\F^r) \simeq
\bigvee_\gamma \S^{2r-2}$.

\begin{theorem}[Rognes]
The connectivity conjecture is true for $r=1$, $2$ and~$3$.
\end{theorem}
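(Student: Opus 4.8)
The plan is to run the component filtration of $\D(\F^r)$. As observed above its spectral sequence degenerates, so that $H_*\D(\F^r)$ is the homology of the free chain complex
$$
0 \to Z_{2r-2} \overset{\partial}\to \dots \overset{\partial}\to Z_{r-1} \to 0 ,
$$
with $Z_{r+c-2}=H_{r+c-2}\bigl(F_c\D(\F^r)/F_{c-1}\D(\F^r)\bigr)$. The connectivity conjecture for a fixed $r$ is then precisely the assertion that this complex is exact except at the top term $Z_{2r-2}$, and I would verify this by inspecting the $1$-, $2$- and $3$-term complexes that occur for $r=1,2,3$.

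For $r=1$ there is nothing to do: $\D(\F^1)\simeq\S$ by the Example, with homology $\Z$ in degree $0=2r-2$. For $r=2$ the complex is $0\to Z_2\overset{\partial}\to Z_1\to 0$ with $Z_1=H_1(F_1\D(\F^2))=\tilde H_0 B(\F^2)=\St_2(\F)$, the augmentation kernel of $\Z[\bbP(\F^2)]$, and $Z_2=\Z[GL_2(\F)/T_2]\otimes_{\Sigma_2}\Lie_2^*$, which I would identify with the free abelian group on ordered pairs $(L,L')$ of distinct lines in $\F^2$ modulo $(L,L')=-(L',L)$. The connecting map of the cofibre sequence $F_1\D(\F^2)\to\D(\F^2)\to F_2\D(\F^2)/F_1\D(\F^2)$ records the two ways of making the two distinguished points comparable, and unwinds to $\partial(L,L')=[L]-[L']$. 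Since $\bbP(\F^2)$ has at least two points these classes span $\St_2(\F)$, so $\partial$ is surjective, $H_1\D(\F^2)=0$, and $\D(\F^2)\simeq\bigvee\S^2$.

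For $r=3$ the complex is $0\to Z_4\overset{\partial}\to Z_3\overset{\partial}\to Z_2\to 0$ with $Z_2=\St_3(\F)$ and $Z_4=\Z[GL_3(\F)/T_3]\otimes_{\Sigma_3}\Lie_3^*$ as above, and $Z_3$ identified, through the preorder refinement of the component filtration, as a sum of $GL_3(\F)$-modules induced from the two maximal parabolics, with coefficients built from the Steinberg module of the $GL_2$-Levi block twisted by $\Lie_2^*$; a typical generator records a plane $W\subset\F^3$, a class $\xi$ in $\St_2(W)$, and a complementary line $L_1$ with $L_1\oplus W=\F^3$, up to a sign. Two exactness statements remain. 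First, $H_2\D(\F^3)=\operatorname{coker}(\partial\colon Z_3\to\St_3(\F))$ must vanish: here $\partial$ sends the datum $(L_1;[L])$ to the alternating sum of the three flags obtained by inserting $L_1$ into $0\subset L\subset W$ in its three positions, extended linearly in $\xi$, and a short computation shows that $\partial$ of the datum with $W=\langle e_2,e_3\rangle$, $\xi=[\langle e_2\rangle]-[\langle e_3\rangle]$ and $L_1=\langle e_1\rangle$ is the apartment class of $(e_1,e_2,e_3)$; since such classes generate $\St_3(\F)$, $\partial$ is onto. Second, $H_3\D(\F^3)=\ker(\partial\colon Z_3\to Z_2)/\operatorname{im}(\partial\colon Z_4\to Z_3)$ must vanish.

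The hard part will be this last step, the exactness in the middle of the three-term complex. My plan is to split the computation along the $GL_3(\F)$-orbits present in $Z_3$ and $Z_4$, reducing the vanishing of $\ker\partial/\operatorname{im}\partial$ to inputs already available: the exactness for $r\le 2$ applied to the $GL_2$-block, the Solomon--Tits acyclicity of the chain complex of $B(\F^2)\subset B(\F^3)$, and the structure of the configuration/Lie contribution feeding into $\Lie_3^*$. The genuinely delicate point throughout is the sign and $\Sigma_r$-bookkeeping forced by the $\Lie$-twists; it stays tractable for $r=3$ but is the reason this direct method is not expected to extend further. The structural inputs are those of [Rognes (1992)].
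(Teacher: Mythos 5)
The paper itself supplies no proof of this theorem; it is quoted as a result of [Rognes (1992)], and the rest of the paper merely records the consequences. So there is no "paper's approach" to compare against directly, but the expected argument is exactly the one you outline: reduce via the component/preorder filtration to the free chain complex $Z_{2r-2}\to\dots\to Z_{r-1}$ and check exactness below the top.

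Your $r=1$ and $r=2$ cases are in order. For $r=2$ the identification of $Z_2 = \Z[GL_2(\F)/T_2]\otimes_{\Sigma_2}\Lie_2^*$ with ordered pairs of distinct lines mod $(L,L')=-(L',L)$ is correct (since $\Lie_2^*$ is the sign representation of $\Sigma_2$), and the boundary $\partial(L,L')=[L]-[L']$ lands surjectively onto the augmentation ideal $\St_2(\F)\subset\Z[\bbP^1(\F)]$, giving $H_1\D(\F^2)=0$. For $r=3$ your verification that $\partial\colon Z_3\to Z_2=\St_3(\F)$ hits the apartment class of $(e_1,e_2,e_3)$, and hence is onto, also checks out once the insertion signs are tracked.

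The genuine gap is exactly where you flag it: exactness at $Z_3$. What you give there is a strategy ("split along $GL_3(\F)$-orbits... reduce to exactness for $r\le 2$... Solomon--Tits acyclicity of $B(\F^2)\subset B(\F^3)$... the configuration/Lie contribution"), not an argument. Nothing in the proposal shows that a cycle in $Z_3$ actually lifts to $Z_4$; the sign/$\Sigma_3$-bookkeeping you correctly identify as delicate is precisely where such a reduction can fail if done casually, and the appeal to $r\le2$ exactness "applied to the $GL_2$-block" needs a precise equivariant comparison map that is not constructed. A secondary unchecked point: your description of $Z_3$ as induced from the two maximal parabolics with Steinberg-of-Levi-block coefficients twisted by $\Lie_2^*$ is asserted rather than derived from the preorder stratification; the contribution of preorders with a multiplicity-two distinguished point (the "configuration" part) needs to be isolated and shown to match this description before the boundary formula can be trusted in general. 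As it stands, $r=3$ is not proved; one must go to [Rognes (1992)] for the actual computation.
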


\begin{definition}
Let
$$
\Delta_r(\F) = H_{2r-2} \D(\F^r)
	\quad (\cong \bigoplus_\gamma \Z)
$$
be the \emph{stable Steinberg representation} of $GL_r(\F)$.
\end{definition}

\begin{example}
$\Delta_1(\F) = \Z$ and $\Delta_2(\F)$ is $H_1$ of the
complete graph on the set $\bbP^1(\F)$ of lines $L \subset \F^2$.
\end{example}

\begin{corollary}
If the connectivity conjecture holds, then
$$
H_*( F_r \K(\F) / F_{r-1} \K(\F)) \cong H_* (\D(\F^r)_{hGL_r(\F)})
\cong H_{*-2r+2}^{gp}(GL_r(\F); \Delta_r(\F))
$$
is concentrated in degrees~$* \ge 2r-2$.
Then $F_r \K(\F) \to \K(\F)$ is $(2r-1)$-connected, so
$$
F_r K_j(\F) = \im (\pi_j F_r \K(\F) \to \pi_j \K(\F))
$$
is equal to $K_j(\F)$ for $j \le 2r-1$, or equivalently, for
$2r \ge j+1$.
\end{corollary}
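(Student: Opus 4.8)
The plan is to read the connectivity of the filtration subquotients $F_s\K(\F)/F_{s-1}\K(\F)$ off the connectivity conjecture, promote this to a connectivity statement for the cofibre of $F_r\K(\F) \to \K(\F)$, and then extract the asserted surjectivity on homotopy groups.

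First I would fix $s \ge 1$ and analyze $F_s\K(\F)/F_{s-1}\K(\F) \simeq \D(\F^s)_{hGL_s(\F)}$ by the Proposition. Granting the connectivity conjecture, $\D(\F^s) \simeq \bigvee_\gamma \S^{2s-2}$, so $H_q\D(\F^s)$ vanishes except for $q = 2s-2$, where it is the stable Steinberg module $\Delta_s(\F)$ with its $GL_s(\F)$-action. The homotopy orbit spectral sequence
$$
E^2_{p,q} = H^{gp}_p(GL_s(\F);\, H_q\D(\F^s)) \Longrightarrow H_{p+q}(\D(\F^s)_{hGL_s(\F)})
$$
converges since $\D(\F^s)$ is bounded below, and its $E^2$-page is concentrated on the line $q = 2s-2$; hence it collapses and gives
$$
H_n(F_s\K(\F)/F_{s-1}\K(\F)) \cong H^{gp}_{n-2s+2}(GL_s(\F);\, \Delta_s(\F)) \,,
$$
which vanishes for $n < 2s-2$ because group homology is concentrated in non-negative degrees. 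Since a homotopy orbit of a connective spectrum is connective, the stable Hurewicz theorem then shows $F_s\K(\F)/F_{s-1}\K(\F)$ is $(2s-3)$-connected.

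Next I would assemble the cofibre of $F_r\K(\F) \to \K(\F)$. As $\K(\F) \simeq \colim_s F_s\K(\F)$ and cofibres commute with this filtered colimit, that cofibre is $\colim_s F_s\K(\F)/F_r\K(\F)$. For $t > r$, the cofibre sequence
$$
F_{t-1}\K(\F)/F_r\K(\F) \longrightarrow F_t\K(\F)/F_r\K(\F) \longrightarrow F_t\K(\F)/F_{t-1}\K(\F)
$$
presents $F_t\K(\F)/F_r\K(\F)$ as an extension of the $(2t-3)$-connected spectrum $F_t\K(\F)/F_{t-1}\K(\F)$ by $F_{t-1}\K(\F)/F_r\K(\F)$; by induction on $t$, with base case $F_r\K(\F)/F_r\K(\F) = *$, the latter is $(2r-1)$-connected, and since $2t-3 \ge 2r-1$ for $t > r$ the extension is again $(2r-1)$-connected. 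Passing to the filtered colimit, under which stable homotopy groups commute, $\K(\F)/F_r\K(\F)$ is $(2r-1)$-connected; that is, $F_r\K(\F) \to \K(\F)$ is $(2r-1)$-connected.

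Finally, the long exact sequence of the cofibre sequence $F_r\K(\F) \to \K(\F) \to \K(\F)/F_r\K(\F)$, together with $\pi_j(\K(\F)/F_r\K(\F)) = 0$ for $j \le 2r-1$, shows that $\pi_j F_r\K(\F) \to \pi_j\K(\F)$ is surjective for $j \le 2r-1$; hence $F_r K_j(\F) = \im(\pi_j F_r\K(\F) \to \pi_j\K(\F)) = K_j(\F)$ for $j \le 2r-1$, equivalently for $2r \ge j+1$. With the connectivity conjecture in hand the argument is essentially formal, and the step demanding the most care is the connectivity bookkeeping for the infinite filtration in the previous paragraph — ensuring nothing is lost on passing to the colimit — rather than any deep point; the genuinely hard input is of course the connectivity conjecture itself for the stable buildings $\D(\F^s)$ with $s > r$, which is presently available only for $s \le 3$.
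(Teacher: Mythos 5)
Your argument is correct and follows the route the paper plainly intends (the paper states the corollary without proof). You correctly observe that the statement as written only names $F_r\K(\F)/F_{r-1}\K(\F)$, but that the conclusion ``$F_r\K(\F) \to \K(\F)$ is $(2r-1)$-connected'' in fact requires the same connectivity bound for every subquotient $F_t\K(\F)/F_{t-1}\K(\F)$ with $t > r$, assembled through the tower $F_t\K(\F)/F_r\K(\F)$ and its filtered colimit $\K(\F)/F_r\K(\F)$; you supply the induction and the colimit commutation that the paper leaves implicit. Two small remarks. First, the passage through homology and the stable Hurewicz theorem is slightly more than is needed: the paper's equivalent formulation of the connectivity conjecture already says $\D(\F^s) \simeq \bigvee_\gamma \S^{2s-2}$ is $(2s-3)$-connected, and homotopy orbits preserve connectivity of connective spectra, so one can read off the $(2s-3)$-connectivity of $F_s\K(\F)/F_{s-1}\K(\F)$ directly in homotopy; but your route also establishes the displayed homology identification, which is part of the corollary's assertion, so it is the more complete choice. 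Second, the key arithmetic $2t-3 \ge 2r-1$ for $t \ge r+1$ is exactly tight at $t = r+1$, which is why the bound comes out as $(2r-1)$ and no better; you handle this correctly.
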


\begin{remark}
For $r\ge2$, if $H_0^{gp}(GL_r(\F); \Delta_r(\F)) =
\Delta_r(\F)_{GL_r(\F)}$ is torsion, hence rationally trivial, then $F_r
K_j(\F)_\Q = K_j(\F)_\Q$ also for $j = 2r$, i.e., for $2r \ge j$.
\end{remark}

\section{The stable rank conjecture}

Applying homology to the sequence of homotopy cofiber sequences
$$
\xymatrix{
{*} \ar[r] & F_1 \K(\F) \ar[d]^{\simeq} \ar[r]
& F_2 \K(\F) \ar[d] \ar[r]
& \dots \ar[r]
& F_r \K(\F) \ar[d] \ar[r]
& \dots \ar[r]
& \K(\F) \\
& \Sigma^\infty B\F^\times_+ 
& \D(\F^2)_{hGL_2(\F)}
& 
& \D(\F^r)_{hGL_r(\F)}
}
$$
with $F_r \K(\F)$ in filtration $s = r-1$
we obtain the \emph{homological rank spectral sequence}
$$
E^1_{s,t}(rk) = H_{s+t}(\D(\F^{s+1})_{hGL_{s+1}(\F)})
	\Longrightarrow_s H_{s+t} \K(\F) \,.
$$
It is of homological type, concentrated in the first quadrant
($s\ge0$ and $t\ge0$).
Assuming the connectivity conjecture, the $E^1$-term can be rewritten
as
$$
E^1_{s,t}(rk) = H_{t-s}^{gp}(GL_{s+1}(\F); \Delta_{s+1}(\F)) \,,
$$
hence is in fact concentrated in the wedge $s\ge0$ and $t\ge s$.
The Hurewicz homomorphism $K_{s+t}(\F) = \pi_{s+t} \K(\F) \to H_{s+t}
\K(\F)$ is a rational equivalence, so after rationalization the rank
spectral sequence converges to $K_{s+t}(\F)_\Q$.

$$
\xymatrix{
t & \dots & \dots & \dots & \dots & \dots \\
4 & H^{gp}_4(\F^\times)
& H_3^{gp}(GL_2 \F; \Delta_2 \F) \ar[l]
& H_2^{gp}(GL_3 \F; \Delta_3 \F) \ar[l]
& H_1^{gp}(GL_4 \F; \Delta_4 \F) \ar[l] & \dots \ar[l] \\
3 & H^{gp}_3(\F^\times)
& H_2^{gp}(GL_2 \F; \Delta_2 \F) \ar[l]
& H_1^{gp}(GL_3 \F; \Delta_3 \F) \ar[l]
& \Delta_4(\F)_{GL_4 \F} \ar[l] & \dots \\
2 & H^{gp}_2(\F^\times)
& H_1^{gp}(GL_2 \F; \Delta_2 \F) \ar[l]
& \Delta_3(\F)_{GL_3 \F} \ar[l] 
& 0 & \dots \\
1 & \F^\times
& \Delta_2(\F)_{GL_2 \F} \ar[l]_{d^1} 
& 0
& 0 & \dots \\
0 & \Z
& 0
& 0
& 0 & \dots \\
E^1_{s,t}(rk) & 0 & 1 & 2 & 3 & s
}
$$

\begin{example}
$E^1_{0,t}(rk) = H_t(B\F^\times) = H^{gp}_t(\F^\times)$ is rationally
isomorphic to $\Lambda^t \F^\times$.
\end{example}

The $E^1$-term suggests the following definition of the motivic complexes
sought by Beilinson and Lichtenbaum.

\begin{definition}
For each $w\ge0$ define the \emph{rank complex}
$(\Gamma_{rk}(w, \F)^*, \delta)$ by
$$
\Gamma_{rk}(w, \F)^i = E^1_{w-i,w}(rk)
$$
and $\delta^i = d^1_{w-i,w} \: \Gamma_{rk}(w, \F)^i \to \Gamma_{rk}(w,
\F)^{i+1}$.
\end{definition}

By definition, $\Gamma_{rk}(w, \F)^i = 0$ for $i > w$.
If the connectivity conjecture holds, then
$$
\Gamma_{rk}(w, \F)^i \cong H^{gp}_i(GL_{w-i+1}(\F); \Delta_{w-i+1}(\F))
$$
is nonzero only for $0 \le i \le w$.

\begin{definition}
Let the \emph{rank cohomology} $H^*_{rk}(\F; \Z(w))$ be the cohomology
of this cochain complex:
$$
H^i_{rk}(\F; \Z(w)) = \frac{\ker \delta^i}{\im \delta^{i-1}}
	= H^i(\Gamma_{rk}(w, \F)^*, \delta) \,.
$$
These groups give the $E^2$-term of the homological rank spectral
sequence
$$
E^2_{s,t}(rk) = H^{t-s}_{rk}(\F; \Z(t))
	\Longrightarrow_s H_{s+t} \K(\F) \,.
$$
\end{definition}

If the connectivity conjecture holds, then this spectral sequence
is concentrated in the region $0 \le s \le t$ (with $s < t$ for $t>0$
if $\Delta_r(\F)_{GL_r(\F)}$ is torsion).

\begin{conjecture}[Stable Rank]
The motivic spectral sequence and the stable rank spectral sequence
are rationally isomorphic, starting from the $E^2$-terms:
$$
\xymatrix{
E^2_{s,t}(mot)_\Q = H^{t-s}_{mot}(\F, \Z(t))_\Q
	\ar@{=>}[r] \ar[d]_{\cong}^{(?)}
	& K_{s+t}(\F)_\Q \ar[d]^{\cong} \\
E^2_{s,t}(rk)_\Q = H^{t-s}_{rk}(\F, \Z(t))_\Q
	\ar@{=>}[r]
	& H_{s+t} \K(\F)_\Q
}
$$
\end{conjecture}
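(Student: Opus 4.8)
Since both spectral sequences converge, after rationalization, to $K_{s+t}(\F)_\Q$ — the motivic one directly, the rank one through the rational Hurewicz isomorphism $K_{s+t}(\F)_\Q \cong H_{s+t}\K(\F)_\Q$ — the conjecture is equivalent to the assertion that the rank filtration and the weight filtration induce one another on the abutment under the reindexing $w+r = j+1$, i.e.
$$
F_r K_j(\F)_\Q = F^{j+1-r} K_j(\F)_\Q = \bigoplus_{w\ge j+1-r} K_j(\F)_\Q^{(w)} \,,
$$
together with the compatibility that promotes this to an isomorphism of spectral sequences from the $E^2$-page on. The only available handle is the action of the Adams operations $\psi^k$: they act on $\K(\F)$, hence on the abutments of both spectral sequences, and on the motivic side $E^2_{s,t}(mot)_\Q = H^{t-s}_{mot}(\F,\Q(t)) = K_{s+t}(\F)_\Q^{(t)}$ is by definition the weight-$t$ eigenspace — this is exactly what forces the motivic spectral sequence to collapse rationally and to realize the weight filtration. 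So the plan is to install the same structure on the rank spectral sequence.

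The first, and I expect the hardest, step is to lift the Adams operations to the rank filtration, or at least to $E^1_{*,*}(rk)_\Q$, compatibly with the abutment action. This is delicate: the exterior power $V \mapsto \Lambda^k V$ is not additive, so it does not act on the additive $S_\bullet$-construction term by term, and even as a self-map of $\K(\F)$ it sends rank $\le r$ only into rank $\le \binom{r}{k}$. One approach is to work with explicit models for the $\lambda$-operations in which the dependence on the filtration degree can be controlled, rationalizing early and using the splitting of $\K(\F)_\Q$ into Adams eigenspectra; another is to bypass the operations on the spectrum and equip the layers $\D(\F^r)_{hGL_r(\F)}$ and the stable Steinberg modules $\Delta_r(\F)$ directly with the extra grading that the construction predicts.

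Once such a compatible Adams action is in hand, the key computation is the weight identification:
$$
E^1_{s,t}(rk)_\Q = H_{s+t}(\D(\F^{s+1})_{hGL_{s+1}(\F)})_\Q \cong H^{gp}_{t-s}(GL_{s+1}(\F); \Delta_{s+1}(\F))_\Q
$$
should be pure of weight $t$; equivalently $H^{gp}_m(GL_r(\F); \Delta_r(\F))_\Q$ should lie in weight $m+r-1$, and in particular $(\Delta_r(\F)_{GL_r(\F)})_\Q$ should be concentrated in weight $r-1$, consistent with the remark above. The base case $s=0$ is the Example: $H^{gp}_t(\F^\times)_\Q \cong \Lambda^t\F^\times_\Q$ is pure of weight $t$ because $\F^\times$ has weight $1$ and products multiply weights. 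For $r\ge2$ one would argue inductively, using the preorder and component filtrations of $\D(\F^r)$, whose subquotients are built from the stable homology of configuration spaces — contributing Lie representations — and from smash products of Tits buildings — contributing tensor powers of the Steinberg modules $\St_r(\F)$, each such factor carrying weight $1$ like an $\F^\times$ — and checking that the weights assembled from the $r$ distinguished rank-jump points and the group homology of the diagonal torus $T_r$ add up to exactly $t$. This representation-theoretic weight bookkeeping for $H_*(\D(\F^r)_{hGL_r(\F)})_\Q$ is the principal obstacle; it already presupposes the connectivity conjecture, so at best one obtains the stable rank conjecture conditionally on connectivity — unconditionally it would follow in the low ranks ($r\le3$) where connectivity is known.

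Granting a compatible Adams action with $E^1_{s,t}(rk)_\Q$ pure of weight $t$, the conclusion is formal. The differential $d^1\colon E^1_{s,t}\to E^1_{s-1,t}$ preserves $t$ and hence weight, so $E^2_{s,t}(rk)_\Q = H^{t-s}_{rk}(\F,\Q(t))$ is again pure of weight $t$; for $r\ge2$ the differential $d^r\colon E^r_{s,t}\to E^r_{s-r,t+r-1}$ strictly raises $t$, hence changes the eigenvalue, and a map commuting with all $\psi^k$ between distinct eigenspaces vanishes, so the rank spectral sequence collapses at $E^2_\Q$ just as the motivic one does. Comparing associated gradeds, $F_{s+1}K_{s+t}(\F)_\Q / F_s K_{s+t}(\F)_\Q \cong E^\infty_{s,t}(rk)_\Q$ is pure of weight $t$; since $\bigoplus_s F_{s+1}K_j(\F)_\Q/F_sK_j(\F)_\Q = K_j(\F)_\Q = \bigoplus_w K_j(\F)_\Q^{(w)}$ is the weight decomposition, matching weights forces $F_r K_j(\F)_\Q = F^{j+1-r}K_j(\F)_\Q$ and $E^2_{s,t}(rk)_\Q \cong K_{s+t}(\F)_\Q^{(t)} = H^{t-s}_{mot}(\F,\Q(t)) = E^2_{s,t}(mot)_\Q$. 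Finally, both spectral sequences are then the spectral sequence of a weight-graded abutment, which upgrades this term-by-term isomorphism to an isomorphism of spectral sequences from $E^2$ onward, compatible with the chosen identification of abutments, as required.
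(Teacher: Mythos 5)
The statement you were asked to prove is labeled a conjecture in the paper, and the paper does not prove it; the only established result nearby is the Theorem giving the $s=0$ case, via the commuting square relating $\Lambda^j\F^\times \to K^M_j(\F) \to K_j(\F)$ to $H^{gp}_j\F^\times \to E^2_{0,j}(rk) \to H_j\K(\F)$ together with the identification $F_1 K_j(\F)_\Q = K^M_j(\F)_\Q$. So there is no paper proof to compare against, and you are right to present what you wrote as a program rather than a proof.

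As a program, your reduction is logically sound: if one can (i) equip the rank spectral sequence with a compatible Adams action from $E^1$ onward, and (ii) show $E^1_{s,t}(rk)_\Q$ is pure of weight $t$, then collapse at $E^2_\Q$, the filtration identity $F_r K_j(\F)_\Q = F^{j+1-r}K_j(\F)_\Q$, and the term-by-term isomorphism with $E^2(mot)_\Q$ all follow formally, and your $s=0$ check matches the paper's Theorem. But (i) and (ii) are genuine open gaps that the proposal does not close, and you correctly flag (i) as the hard point: $\Lambda^k$ is not additive, does not preserve the rank filtration (rank $r$ maps into rank $\le\binom{r}{k}$), and there is no evident $\psi^k$-action on $F_r\K(\F)$ or on the layers $\D(\F^r)_{hGL_r(\F)}$. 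Moreover (ii) --- assigning weight $r-1$ to $\Delta_r(\F)_{GL_r(\F)}$ and weight $1$ per homological degree of $GL_r(\F)$ --- is close to being equivalent to the conjecture itself, so the plan restates more than it reduces; and everything is conditional on the connectivity conjecture, as you note. What you have produced is a correct and candid articulation of what a proof via Adams operations would have to accomplish, not a proof of the statement.
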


\begin{theorem}
The stable rank conjecture holds for $s=0$.
More precisely,
$$
\xymatrix{
E^2_{0,j}(mot) \ar@{->>}[r] \ar[d] & E^\rho_{0,j}(mot) \ar[d] \\
E^2_{0,j}(rk) \ar@{->>}[r] & E^\rho_{0,j}(rk)
}
$$
consists of rational isomorphisms for all $\rho\ge2$.
\end{theorem}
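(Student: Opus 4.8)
The plan is to identify, rationally, the $s=0$ column of each spectral sequence with Milnor $K$-theory via its edge homomorphism, and then to observe that this identification forces every arrow in the square to be a rational isomorphism. The motivic side is immediate: by definition $E^2_{0,j}(mot)_\Q = H^j_{mot}(\F,\Q(j)) = K_j(\F)_\Q^{(j)}$, and by the theorem of Nesterenko--Suslin and Totaro (equivalently, by the rational comparison $CH^j(\F,j)_\Q \cong H^j_{mot}(\F,\Q(j))$ together with $CH^j(\F,j) \cong K^M_j(\F)$) this group is $K^M_j(\F)_\Q$, sitting in $K_j(\F)_\Q$ as the top weight step $F^j K_j(\F)_\Q$. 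Since the rational motivic spectral sequence collapses at $E^2$, the edge surjections $E^2_{0,j}(mot)_\Q \twoheadrightarrow E^\rho_{0,j}(mot)_\Q$ are isomorphisms for all $\rho \ge 2$, and each maps isomorphically onto $K^M_j(\F)_\Q \subset K_j(\F)_\Q$.

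For the rank side I would first read off the abutment. Since $F_1\K(\F) = \Sigma^\infty (B\F^\times)_+$ sits in filtration $s = 0$, the edge homomorphism of the rank spectral sequence in the $s=0$ column is the map $H_j F_1\K(\F) \to H_j\K(\F)$ induced by $F_1\K(\F) \to \K(\F)$; rationally, using the Hurewicz isomorphism for spectra, its image is $\im\bigl(\pi_j F_1\K(\F)_\Q \to \pi_j\K(\F)_\Q\bigr) = F_1 K_j(\F)_\Q$, which equals $K^M_j(\F)_\Q$ by the Example. Thus $E^\infty_{0,j}(rk)_\Q = K^M_j(\F)_\Q$, the \emph{same} subgroup of $K_j(\F)_\Q \cong H_j\K(\F)_\Q$ that appeared on the motivic side. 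Moreover $E^1_{0,j}(rk) = H_j(B\F^\times)$ is rationally $\Lambda^j\F^\times_\Q$, and the composite $\Lambda^j\F^\times_\Q = E^1_{0,j}(rk)_\Q \twoheadrightarrow E^2_{0,j}(rk)_\Q \twoheadrightarrow \dots \twoheadrightarrow E^\infty_{0,j}(rk)_\Q = K^M_j(\F)_\Q$ is the symbol map, whose kernel is the span $R_j$ of the Steinberg relations $\omega \wedge \{u\} \wedge \{1-u\}$ with $\omega \in \Lambda^{j-2}\F^\times_\Q$ and $u \in \F \setminus \{0,1\}$. Hence the only point to prove on the rank side is that $\im(d^1_{1,j})_\Q$ --- which a priori is contained in $R_j$, since $E^1_{0,j}(rk)_\Q \to E^\infty_{0,j}(rk)_\Q$ factors through $E^2_{0,j}(rk)_\Q$ --- actually equals $R_j$. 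Once this is known, $E^2_{0,j}(rk)_\Q = \Lambda^j\F^\times_\Q / R_j = K^M_j(\F)_\Q = E^\infty_{0,j}(rk)_\Q$, so the surjection $E^2_{0,j}(rk)_\Q \twoheadrightarrow E^\infty_{0,j}(rk)_\Q$ is injective, hence an isomorphism, and so is each $E^2_{0,j}(rk)_\Q \twoheadrightarrow E^\rho_{0,j}(rk)_\Q$; in particular the higher differentials into the $s=0$ column vanish rationally.

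To prove $R_j \subseteq \im(d^1_{1,j})_\Q$ I would exploit multiplicativity of the rank filtration. Because $\dim(V \otimes W) = \dim V \cdot \dim W$ and $\otimes$ is exact over a field, the ring-spectrum structure on $\K(\F)$ restricts to pairings $F_a\K(\F) \wedge F_b\K(\F) \to F_{ab}\K(\F)$; in particular $F_1\K(\F)$ is a ring spectrum, $F_2\K(\F)/F_1\K(\F)$ is an $F_1\K(\F)$-module, and the connecting map $d^1_{1,*} = \partial \colon \pi_{*+1}\bigl(F_2\K(\F)/F_1\K(\F)\bigr) \to \pi_* F_1\K(\F)$ of this cofiber sequence is $F_1\K(\F)$-linear. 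Rationally $\pi_* F_1\K(\F)_\Q = \pi_*^S\bigl((B\F^\times)_+\bigr)_\Q = \Lambda^*\F^\times_\Q$ with the exterior product, since the multiplication is induced by $GL_1 \times GL_1 \to GL_1$; so $\bigoplus_j \im(d^1_{1,j})_\Q$ is a graded $\Lambda^*\F^\times_\Q$-submodule of $\Lambda^*\F^\times_\Q$. It therefore suffices to treat $j = 2$: if $\{u\} \wedge \{1-u\} \in \im(d^1_{1,2})_\Q$ for every $u \in \F \setminus \{0,1\}$, then $\omega \wedge \{u\} \wedge \{1-u\} \in \im(d^1_{1,j})_\Q$ for all $\omega \in \Lambda^{j-2}\F^\times_\Q$, i.e.\ $R_j \subseteq \im(d^1_{1,j})_\Q$. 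For the case $j = 2$ I would compute $\partial$ directly at spectrum levels $n = 1, 2$ using $F_2\K(\F)/F_1\K(\F) \simeq \D(\F^2)_{hGL_2(\F)}$: starting from the $1$-cycle in $\D(\F^2)$ carried by the three lines $\langle e_1\rangle, \langle e_2 \rangle, \langle e_1 + e_2 \rangle$ of $\F^2$, translated by the diagonal matrix $\mathrm{diag}(u, 1) \in GL_2(\F)$ (whose role is exactly to record the coordinate $u$), one obtains a class in $H_3(\D(\F^2)_{hGL_2(\F)})$ whose boundary is $\pm\{u\} \wedge \{1-u\}$; equivalently one invokes the realization of the Steinberg relation by the rank-two stratum from [Rognes (1992)].

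Granting all of this, the assembly of the square is formal. Both $E^2_{0,j}(mot)_\Q$ and $E^2_{0,j}(rk)_\Q$ are now identified, rationally and injectively via their edge maps, with the single subgroup $K^M_j(\F)_\Q = F^j K_j(\F)_\Q = K_j(\F)_\Q^{(j)}$ of $K_j(\F)_\Q \cong H_j\K(\F)_\Q$; taking the vertical arrows to be this common identification --- and likewise on the $E^\rho$ side, using the residual edge maps to the common rational abutment --- makes the square commute, and all four arrows are rational isomorphisms. I expect the genuine work to be concentrated in the $j = 2$ computation of $\partial$: verifying at the chain level in the stable building $\D(\F^2)$ that its rank-two stratum realizes precisely the Steinberg relation $\{u, 1-u\}$. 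Everything else reduces to the Example, the rational collapse of the motivic spectral sequence, the theorem of Nesterenko--Suslin and Totaro, and the (elementary) multiplicativity of the rank filtration.
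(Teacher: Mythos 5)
Your approach matches the paper's in outline: both reduce to comparing $E^2_{0,j}(rk)$ with Milnor $K$-theory, and both exploit the rational identification $E^1_{0,j}(rk)_\Q \cong \Lambda^j\F^\times_\Q$ together with the Example's assertion that $F_1 K_j(\F)_\Q = K^M_j(\F)_\Q$. The paper's sketch proof is literally just the diagram
$$
\Lambda^j\F^\times \twoheadrightarrow K^M_j(\F) \to K_j(\F)
$$
mapping vertically to
$$
H^{gp}_j\F^\times \to E^2_{0,j}(rk) \to H_j\K(\F),
$$
with the two outer verticals being rational isomorphisms; the existence and bijectivity of the middle vertical is left to the reader. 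You correctly isolate that the whole content is the inclusion $R_j \subseteq \im(d^1_{1,j})_\Q$ (the reverse inclusion being formal from the edge to $E^\infty$), and you propose filling the gap with a genuinely new lemma: multiplicativity $F_a\K \wedge F_b\K \to F_{ab}\K$ induced by $\dim(V\otimes W) = \dim V \cdot \dim W$, which makes $d^1_{1,*}$ into a map of $\Lambda^*\F^\times_\Q$-modules and reduces everything to $j=2$. That reduction is clean and worth spelling out even though the paper does not mention it; it is also the natural explanation for \emph{why} a single low-degree computation suffices.

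The one place your argument is not yet a proof is the $j=2$ step itself. The chain you describe --- the triangle on the lines $\langle e_1\rangle, \langle e_2\rangle, \langle e_1+e_2\rangle$ ``translated by $\mathrm{diag}(u,1)$'' --- does not obviously produce a cycle in the bar complex computing $H_1^{gp}(GL_2(\F);\Delta_2(\F))$: a single chain $[g]\otimes\zeta$ has boundary $g\zeta - \zeta$, which is nonzero here, and the element $1-u$ never appears in the configuration you write down, even though it must enter to produce $\{u\}\wedge\{1-u\}$ under $d^1_{1,2}$. What is needed is a specific $1$-cycle of $GL_2(\F)$ with coefficients in $H_1$ of the complete graph on $\bbP^1(\F)$, built from a cross-ratio configuration of four lines (involving both $u$ and $1-u$), together with the chain-level identification of the connecting map $\partial\:\pi_3(\D(\F^2)_{hGL_2(\F)}) \to \pi_2 F_1\K(\F)_\Q \cong \Lambda^2\F^\times_\Q$. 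Citing the rank-two stratum analysis from [Rognes (1992)] is the right move, but the explicit cycle as stated is not correct, so this step should be replaced by a careful computation or a precise citation. Once that is done your proposal is a complete and somewhat more detailed version of the paper's argument, and the Nesterenko--Suslin--Totaro input you use on the motivic side is exactly the substitute for the paper's tacit identification $E^2_{0,j}(mot)_\Q = K^M_j(\F)_\Q$.
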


\begin{proof}[Sketch proof]
Consider the diagram
$$
\xymatrix{
\Lambda^j \F^\times \ar@{->>}[r] \ar[d]
	& K^M_j(\F) \ar[r] \ar[d]
	& K_j(\F) \ar[d] \\
H^{gp}_j \F^\times \ar[r]
	& E^2_{0,j}(rk) \ar[r]
	& H_j \K(\F) \,.
}
$$
\end{proof}

The connectivity and stable rank conjectures together imply (the rational
form of) the Beilinson--Soul{\'e} vanishing conjecture.

An advantage of the stable rank point of view is that $\D(\F^r)$
is described only in terms of linear subspaces $V \subset \F^r$,
as opposed to general subvarieties of $\Delta^q_\F$.

\section{The common basis complex}

By covering the stable building $\D(\F^r)$ by the $GL_r(\F)$-translates
of a \emph{stable apartment} $\bfA(r)$, we obtain the following
elementary description of the stable building.

\begin{definition}
Let the \emph{common basis complex} $D'(\F^r)$ be the simplicial
complex with vertices the proper, nontrivial subspaces $0 \subsetneq V
\subsetneq \F^r$, such that a set $\{V_0, \dots, V_p\}$ of vertices spans
a $p$-simplex if and only if these vector spaces admit a \emph{common
basis}, i.e., there exists a basis $\B = \{b_1, \dots, b_r\}$ for $\F^r$
such that for each $i = 0, \dots, p$ there is a subset of $\B$ that is
a basis for $V_i$.
\end{definition}

\begin{theorem}
$\Sigma^\infty \Sigma D'(\F^r) \simeq \D(\F^r)$.
\end{theorem}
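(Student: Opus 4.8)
The plan is to exhibit a $GL_r(\F)$-equivariant chain of weak equivalences connecting the stable building $\D(\F^r)$ to the suspension spectrum $\Sigma^\infty \Sigma D'(\F^r)$, by interpolating through the simplicial set $D(\F^r)_n$ at each spectrum level~$n$. Recall that $D(\F^r)_n$ has $q$-simplices the lattice diagrams $\sigma \: [q]^n \to \Sub(\F^r)$, and that a simplex $\sigma$ carries $r$ distinguished points $\vec p_1, \dots, \vec p_r \in [q]^n$ (with multiplicity) where the rank jumps are positive. The key geometric input is that the ``stable apartment'' $\bfA(r)$ --- the subcomplex of simplices all of whose distinguished subspaces $V_{\vec p_s}$ are coordinate subspaces with respect to a \emph{fixed} basis $\B = \{b_1, \dots, b_r\}$ of $\F^r$ --- is built from the combinatorics of preorders on $\{1,\dots,r\}$ alone, and that the $GL_r(\F)$-translates of $\bfA(r)$ cover $\D(\F^r)$, with the intersection pattern governed precisely by which collections of subspaces admit a common basis.

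First I would make the covering precise: for each basis $\B$ of $\F^r$ let $\bfA_\B(r) \subset \D(\F^r)$ be the corresponding apartment; then $\D(\F^r) = \bigcup_\B \bfA_\B(r)$, since any lattice diagram, having only $r$ distinguished points, has all its subspaces spanned by subsets of \emph{some} basis (one adapts a basis to the flag of distinguished subspaces, which form a chain after passing to path components, and then splits across components). Next I would analyze the nerve of this cover. A finite intersection $\bigcap_{i} \bfA_{\B_i}(r)$ is nonempty (and in fact highly connected --- ideally contractible or a single stable cell) exactly when the bases $\B_i$ have enough common coordinate subspaces, and the combinatorial bookkeeping here is what produces the common basis condition: a $p$-simplex $\{V_0, \dots, V_p\}$ of $D'(\F^r)$ records a $(p{+}1)$-fold overlap of apartments sharing those subspaces as common coordinate subspaces. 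The Mayer--Vietoris / homotopy-colimit spectral sequence for the cover then identifies $\D(\F^r)$, up to the requisite shift, with the homotopy colimit over the nerve, i.e.\ with $\Sigma^\infty$ of a simplicial complex whose simplices are exactly the common-basis collections --- that is, $D'(\F^r)$ --- with one extra suspension coordinate coming from the two ``collapsing'' face operators (the $0$-th and $q$-th, as in the Solomon--Tits computation of $F_1\D$), matching the $\Sigma^\infty \Sigma$ on the right.

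Concretely the steps are: (1) define $\bfA_\B(r)$ and verify it is, levelwise and stably, a single cell --- it is the stable building of the ``trivial'' split situation, so $\bfA_\B(r) \simeq \Sigma^\infty S^{r-1}$ after the shift, built from the poset of coordinate subspaces; (2) show the translates cover $\D(\F^r)$; (3) compute the overlaps $\bfA_{\B_0}(r) \cap \dots \cap \bfA_{\B_p}(r)$ and show each is again a single stable cell of the same dimension when $\B_0, \dots, \B_p$ share a common basis-compatible family of subspaces and is contractible (stably trivial) otherwise --- this is where one must be careful that ``admitting a common basis'' is the exact nonemptiness-plus-acyclicity condition; (4) assemble via the homotopy-colimit decomposition $\D(\F^r) \simeq \operatorname{hocolim}$ over the poset of overlaps, and recognize that poset's nerve as the barycentric subdivision of $D'(\F^r)$; (5) track the suspension coordinates and the $GL_r(\F)$-action to land on $\Sigma^\infty \Sigma D'(\F^r)$ equivariantly.

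The main obstacle I expect is step~(3): controlling the homotopy type of the iterated apartment intersections. It is easy to see that two apartments $\bfA_{\B_0}(r), \bfA_{\B_1}(r)$ meet in the subcomplex of diagrams whose distinguished subspaces are simultaneously coordinate for $\B_0$ and for $\B_1$, but one must prove this intersection is itself contractible (or a single stable cell) so that the nerve lemma applies and no higher correction terms appear --- and, dually, that when a collection of bases fails to admit a common adapted basis the corresponding ``intersection'' is empty or stably trivial, so that it contributes no simplex to $D'(\F^r)$. This requires a careful induction on $r$, splitting off a common coordinate line and reducing to $\D(\F^{r-1})$, and it is the place where the precise definition of the common basis complex is forced. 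Once this acyclicity of overlaps is in hand, the rest is the standard homotopy-colimit-over-a-cover argument together with bookkeeping of the two suspension coordinates.
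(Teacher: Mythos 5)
Your overall strategy --- cover $\D(\F^r)$ by $GL_r(\F)$-translates of a stable apartment and pass to the nerve of the cover --- is indeed the approach the paper takes, but you collapse what is really a \emph{two-stage} nerve argument into one stage, and that conflation hides a genuine gap. The nerve of the cover $\{g\,\bfA(r)\}_{g\in GL_r(\F)}$ has vertices the \emph{bases} of $\F^r$ (equivalently, elements of $GL_r(\F)$), not subspaces; the paper's criterion is that a $(p{+}1)$-fold intersection $g_0\bfA(r)\cap\dots\cap g_p\bfA(r)$ is $\cong\S$ precisely when there is a \emph{single} proper nontrivial subspace $V$ that is a coordinate subspace for every $g_s$, and stably trivial otherwise. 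This yields $\D(\F^r)\simeq\Sigma^\infty\Sigma D''(\F^r)$, where $D''(\F^r)$ is a simplicial complex on $GL_r(\F)$. You then assert that ``the poset's nerve is the barycentric subdivision of $D'(\F^r)$,'' but that identification is false: $D''$ is a complex on bases while $D'$ is a complex on subspaces, and passing from one to the other requires a \emph{second} covering argument. The paper covers $D''(\F^r)$ by the contractible subcomplexes $C(V)$ (all $g$ for which $V$ is $g$-coordinate), and shows a $(p{+}1)$-fold intersection $C(V_0)\cap\dots\cap C(V_p)$ is contractible exactly when $\{V_0,\dots,V_p\}$ admits a common basis, and empty otherwise; only this dual nerve step produces $D'(\F^r)$. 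Your write-up never supplies this step, and the sentence ``a $p$-simplex $\{V_0,\dots,V_p\}$ of $D'(\F^r)$ records a $(p{+}1)$-fold overlap of apartments'' is where the two indexing sets (bases versus subspaces) get silently merged.

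Two subsidiary issues. First, your step (1) claims $\bfA_\B(r)\simeq\Sigma^\infty S^{r-1}$; in fact the paper computes $A(r)_n\cong S^{rn}$, so $\bfA(1)\cong\S$ but $\bfA(r)\simeq *$ for $r\ge 2$ --- the stable apartment is stably trivial, not a sphere, and it is the \emph{intersections} that contribute copies of $\S$. Second, your intersection criterion ``the bases have enough common coordinate subspaces'' is too vague to run the nerve theorem; the relevant condition is existence of one subspace simultaneously coordinatized by all bases in the tuple, and it is this precise statement (together with its dual for the $C(V)$'s) that forces the definition of the common basis complex. Once you separate the two covers and state both intersection criteria exactly, you will be reproducing the paper's argument.
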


\begin{proof}[Sketch proof]
The stable apartment $\bfA(r)$ the a (pre-)spectrum with $n$-th space
$A(r)_n$ a simplicial set with $q$-simplices diagrams $\sigma \: [q]^n
\to \Sub(\{1, \dots, r\})$ consisting of subsets of $\{1, \dots, r\}$
and inclusions between these.  We know that $A(r)_n \cong S^{rn}$,
so $\bfA(1) \cong \S$ and $\bfA(r) \simeq *$ for $r\ge2$.  (This,
incidentally, gives a proof of the Barratt--Priddy--Quillen theorem.)

The free $\F$-vector space functor $\Sub(\{1, \dots, r\}) \to
\Sub(\F^r)$ induces an embedding $\bfA(r) \cof \D(\F^r)$, and the
translates $\{g \bfA(r) \mid g \in GL_r(\F)\}$ cover $\D(\F^r)$.
A $(p+1)$-fold intersection
$$
g_0 \bfA(r) \cap \dots \cap g_p \bfA(r)
$$
is isomorphic to $\S$ if there is a proper, nontrivial subspace $V
\subset \F^r$ such that for each $0 \le s \le p$ there is a basis for
$V$ given by a subset of the columns of $g_s \in GL_r(\F)$.  Otherwise,
the intersection is (stably) contractible.  Hence $\D(\F^r) \simeq
\Sigma^\infty \Sigma D''(\F^r)$, where $D''(\F^r)$ is the simplicial
complex with vertices the elements $g$ of $GL_r(\F)$, such that $\{g_0,
\dots, g_p\}$ span a $p$-simplex if and only if there is a $0 \subsetneq
V \subsetneq \F^r$ such that for each $0 \le s \le p$ a subset
of the columns of $g_s$ is a basis for $V$.

For each $0 \subsetneq V \subsetneq \F^r$ the set of $g \in GL_r(\F)$
such that a subset of the columns of $g$ is a basis for $V$
span a contractible subspace $C(V) \subset D''(\F^r)$.  A
$p$-fold intersection
$$
C(V_0) \cap \dots \cap C(V_p)
$$
is contractible if there exists a single $g \in GL_r(\F)$
such that for each $0 \le t \le p$ a subset
of the columns of $g$ is a basis for $V_t$.  In other words,
the intersection is contractible if $\{V_0, \dots, V_p\}$ admit
a common basis.  Otherwise the intersection is empty.
This proves that $D''(\F^r) \simeq D'(\F^r)$.
\end{proof}

\begin{conjecture}[Connectivity]
$\tilde H_* D'(\F^r)$ is concentrated in degree $(2r-3)$.
\end{conjecture}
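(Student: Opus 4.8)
The statement to establish is that $\tilde H_* D'(\F^r)$ is concentrated in degree $(2r-3)$, which is the common-basis-complex reformulation of the connectivity conjecture for $\D(\F^r)$. The strategy is to transport known structure across the equivalence $\Sigma^\infty \Sigma D'(\F^r) \simeq \D(\F^r)$ of the previous theorem, and then attack the vanishing directly on the simplicial complex $D'(\F^r)$.

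\smallskip

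\noindent\emph{Step 1: Dimension bounds.} First I would record that $D'(\F^r)$ has dimension at most $r-2$: a simplex $\{V_0,\dots,V_p\}$ with a common basis $\B$ picks out $p+1$ distinct subsets of $\B$, none equal to $\emptyset$ or $\B$, and a flag-refinement argument (any set of subspaces with a common basis can be extended to a complete flag compatible with that basis) shows $p \le r-2$. Combined with the equivalence $\Sigma^\infty\Sigma D'(\F^r)\simeq\D(\F^r)$ and Corollary on the range of $H_*\D(\F^r)$ (concentrated in $r-1\le *\le 2r-2$), suspension shifts degrees by one, so $\tilde H_* D'(\F^r)$ is automatically concentrated in $r-2 \le * \le 2r-3$. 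The content of the conjecture is therefore the vanishing $\tilde H_i D'(\F^r)=0$ for $r-2 \le i \le 2r-4$, equivalently that $D'(\F^r)$ is $(2r-4)$-connected.

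\smallskip

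\noindent\emph{Step 2: An inductive / nerve-covering argument.} The natural approach is induction on $r$, using the apartment cover. The translates $\{gD'_{\bfA}\}$ — where $D'_{\bfA}$ is the subcomplex spanned by coordinate subspaces of one apartment — cover $D'(\F^r)$, and one runs the generalized nerve theorem: $D'(\F^r)$ is built from a homotopy colimit over the nerve of the cover whose pieces are the (iterated) intersections, which by the common-basis condition are again indexed by the subspaces admitting a common basis with a fixed coordinate flag. I would set up a Morse-theoretic or "links" argument: fix a coordinate line $L_0$ (or hyperplane), stratify $D'(\F^r)$ by the position of each $V_i$ relative to $L_0$, and show that the link of $L_0$, together with the subcomplex of subspaces having a common basis with $L_0$, assembles from common-basis complexes of $\F^{r-1}$ (quotient by $L_0$) and $\F^{1}$, which are highly connected by induction. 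The join/smash structure on the stable building side (the $\Lie_r^*$ and $\St_r$ appearing in $Z_{2r-2}$ and $Z_{r-1}$) should be mirrored here by joins of smaller common basis complexes.

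\smallskip

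\noindent\emph{Main obstacle.} The hard part is precisely the middle-dimensional vanishing: the chain complex $0\to Z_{2r-2}\to\dots\to Z_{r-1}\to 0$ has many terms for $r\ge4$, and there is no known reason — beyond the conjecture — that the differentials are exact except at the top. The apartment cover is easy to write down, but the intersections are \emph{not} contractible in general (two coordinate flags need not have a common basis unless they are "close"), so the nerve is combinatorially intricate, and controlling its connectivity is equivalent to the whole problem rather than a reduction of it. I expect that a successful proof would need either (a) a clever discrete Morse function on $D'(\F^r)$ whose critical cells all sit in top dimension, or (b) a representation-theoretic input showing the homology $\Delta_r(\F)$ of the putative answer has the right dimension, forcing exactness by an Euler-characteristic and rank count. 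The case $r=4$ is the first genuinely new case and is where I would test any proposed argument.
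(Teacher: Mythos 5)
The statement you are asked to prove is labeled a \emph{Conjecture} in the paper, and the paper itself does not prove it; the only positive result in that direction is the theorem that the connectivity conjecture holds for $r = 1, 2, 3$, together with the $r = 2$ example worked out at the end of the common-basis-complex section. Your proposal correctly recognizes this: it is a plan of attack rather than a proof, and you honestly flag the middle-dimensional vanishing (exactness of the complex $0 \to Z_{2r-2} \to \dots \to Z_{r-1} \to 0$ except at the top) as the genuine open content. That assessment is accurate and matches the state of affairs in the paper, so there is no ``paper's own proof'' to compare against.

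There is, however, a concrete error in Step~1. The complex $D'(\F^r)$ does \emph{not} have dimension at most $r-2$. A simplex of $D'(\F^r)$ is a collection of proper nontrivial subspaces of $\F^r$ admitting a common basis $\B = \{b_1,\dots,b_r\}$, and these subspaces need not be nested: for example, with $r = 3$ all six nonempty proper subsets of $\B$ span coordinate subspaces of $\F^3$ with $\B$ as a common basis, forming a $5$-simplex. In general the dimension of $D'(\F^r)$ is $2^r - 3$, and the ``flag-refinement'' argument you invoke does not bound the number of distinct subspaces in a simplex, only the length of a chain among them. This matters because the bound you claim would, if true, force $\tilde H_i D'(\F^r) = 0$ for $i > r-2$ and hence contradict the conjecture itself, which locates nontrivial homology in degree $2r-3 > r-2$ for $r \ge 2$. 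The correct range $r-2 \le * \le 2r-3$ does hold, but it comes from the Corollary on $H_* \D(\F^r)$ together with the suspension equivalence $\Sigma^\infty \Sigma D'(\F^r) \simeq \D(\F^r)$, not from any dimension count on $D'(\F^r)$. Your Step~2 is a reasonable heuristic, and your diagnosis that the apartment intersections fail to be contractible (so that the nerve lemma cannot be applied naively and controlling the nerve's connectivity is essentially the whole problem) is the right thing to say; but as written the proposal neither closes the gap nor claims to.
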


\begin{example}
For $r=2$, $D'(\F^2)$ is the complete graph on the set $\bbP^1(\F)$
of lines $L \subset \F^2$.  It is connected, hence its homology
$\tilde H_* D'(\F^2)$ is concentrated in degree~$1$.
Thus $\Delta_2(\F)$ is the homology of the complete graph on $\bbP^1(\F)$,
as previously claimed.
\end{example}

\begin{bibdiv}
\begin{biblist}

\bib{MR1191383}{article}{
   author={Rognes, John},
   title={A spectrum level rank filtration in algebraic $K$-theory},
   journal={Topology},
   volume={31},
   date={1992},
   number={4},
   pages={813--845},
   issn={0040-9383},
   review={\MR{1191383}},
   doi={10.1016/0040-9383(92)90012-7},
}

\end{biblist}
\end{bibdiv}

\end{document}